\documentclass[11pt]{article}
\usepackage{amsmath,amssymb,amsthm}
\usepackage[margin=1.1in]{geometry}
\usepackage{hyperref}

\newtheorem{theorem}{Theorem}[section]
\newtheorem{lemma}[theorem]{Lemma}
\newtheorem{proposition}[theorem]{Proposition}
\newtheorem{corollary}[theorem]{Corollary}
\theoremstyle{remark}
\newtheorem{remark}[theorem]{Remark}

\newcommand{\Res}[1]{\operatorname*{Res}_{#1}}
\newcommand{\re}{\operatorname{Re}}
\newcommand{\im}{\operatorname{Im}}
\numberwithin{equation}{section}

\title{Proof of a Conjectured Ramanujan Type Master Theorem for Powers of the Cosecant}
\author{Zachary P. Bradshaw}
\date{QodeX Quantum, Inc.\\ zak@qodexquantum.ai}

\begin{document}
\maketitle

\begin{abstract}
We prove a conjecture of Bradshaw and Atale asserting a Ramanujan type master theorem for the kernel $\pi^m/\sin^m(\pi s)$, whose poles at the non-positive integers have order $m$. The residue data at these poles are organized by a family of polynomials studied by Airault with coefficients the central factorial numbers of the first kind. The proof reduces the conjecture to a Laurent series identity for $\csc^m$, established by induction from an elementary differential recursion, and the resulting theorem holds under a growth hypothesis strictly weaker than Hardy's. As applications, we obtain integral representations for powers of the cosecant, a closed form for the Mellin convolution powers of the Cauchy kernel $(1+x)^{-1}$, and a family of differential identities satisfied by the Airault polynomials.
\end{abstract}

\section{Introduction}\label{sec:intro}

Ramanujan's master theorem~\cite{amdeberhan2012ramanujan} states that if $f$ admits an expansion
\begin{equation}\label{eq:rmt-series}
f(x)=\sum_{n=0}^{\infty}\frac{(-1)^n}{n!} g(n) x^n
\end{equation}
in which the coefficient sequence $g(n)$ extends naturally to a function of a complex variable, then the Mellin transform of $f$ is given by
\begin{equation}\label{eq:rmt}
\int_0^\infty x^{s-1}f(x) dx=\Gamma(s) g(-s).
\end{equation}
Ramanujan's own derivation, communicated to Hardy in his quarterly reports~\cite{berndt1983quarterly,berndt2012ramanujan}, was formal; Hardy~\cite{hardy1999ramanujan} supplied a rigorous proof under natural hypotheses. Let $H(\delta)=\{z\in\mathbb{C}:\re(z)\ge-\delta\}$. Hardy's theorem asserts that if $g$ is analytic on $H(\delta)$ for some $0<\delta<1$ and satisfies the growth condition
\begin{equation}\label{eq:hardy-growth}
|g(v+iw)|<Ce^{Pv+A|w|}
\end{equation}
for $v+iw\in H(\delta)$ and constants $C,P\in\mathbb{R}$ and $A<\pi$, then
\begin{equation}\label{eq:hardy-rmt}
\int_0^\infty x^{s-1}\sum_{k=0}^{\infty}(-1)^k g(k) x^k dx=\frac{\pi}{\sin(\pi s)} g(-s),
\qquad 0<\re(s)<\delta.
\end{equation}
This result has been generalized in various ways~\cite{atale2021certain,bradshaw2025generalized,bradshaw2023operational,ding1996ramanujan,olafsson2012ramanujan,olafsson2013ramanujan}, perhaps most notably a multivariate version known as the method of brackets~\cite{bradshaw2023compatibility,bradshaw2023explorations,gonzalez2020extension,gonzalez2017extension,gonzalez2010method,gonzalez2010definite}, which has been used to evaluate Feynman diagrams~\cite{ananthanarayan2023method,gonzalez2010methodfeynman}.

The proof rests on two pillars~\cite{bradshaw2023explorations,flajolet1995mellin}. The first is a residue computation, in which the poles of $\pi/\sin(\pi z)$ at the non-positive integers generate the power series in the integrand. The second is the Mellin inversion theorem. Nothing in this architecture is specific to $\pi/\sin(\pi s)$~\cite{bradshaw2025generalized}. Replacing it by a meromorphic function $h(s)$ with poles at the non-positive integers produces a family of Ramanujan-like master theorems of the shape
\begin{equation}\label{eq:meromorphic-mt}
\int_0^\infty x^{s-1}\sum_{k=0}^{\infty}\Res{z=-k}\bigl(h(z) g(-z) x^{-z}\bigr) dx=h(s) g(-s),
\end{equation}
proved rigorously when the poles of $h$ are simple~\cite[Theorem~2]{bradshaw2025generalized} and heuristically for poles of arbitrary order~\cite[Theorem~5]{bradshaw2025generalized}, where the residues acquire factors of $\log(x)$~\cite{bleistein1986asymptotic,paris2001asymptotics}. The natural test case for the higher-order theory is the $m$-th power $\pi^m/\sin^m(\pi s)$, whose poles at the non-positive integers have exact order $m$. Numerical experimentation in~\cite{bradshaw2025generalized} led to the following conjecture. Define a sequence of polynomials by
\begin{equation}\label{eq:airault-rec}
P_1(x)=1,\qquad P_2(x)=x,\qquad P_m(x)=\bigl(x^2+(m-2)^2\pi^2\bigr)P_{m-2}(x),\quad m>2.
\end{equation}
These polynomials appear (up to rescaling) in work of Airault~\cite{airault2008hyperbolic} on Fourier transforms of hyperbolic measures, and by writing $P_{m}(\pi t)=\pi^{m-1}\widetilde{P}_{m}(t)$, the coefficients of $\widetilde{P}_{m}$ become central factorial numbers of the first kind~\cite{butzer1989central}. Conjecture~1 of~\cite{bradshaw2025generalized} asserts that, under suitable growth conditions on $g$,
\begin{equation}\label{eq:conjecture}
\int_0^\infty x^{s-1}\sum_{n=0}^{\infty}(-1)^{mn}\Bigl[P_m\Bigl(\tfrac{d}{dz}+\log(x)\Bigr)g(z)\Bigr]_{z=n}x^n dx
=\frac{(-1)^{m-1}(m-1)! \pi^m}{\sin^m(\pi s)} g(-s).
\end{equation}
The purpose of this paper is to prove this conjecture, with an explicit and, as it turns out, \emph{weaker} growth hypothesis than Hardy's: the constant $\pi$ in \eqref{eq:hardy-growth} may be replaced by $m\pi$.

\begin{theorem}[Cosecant-power master theorem]\label{thm:A}
Let $m\ge 1$ be an integer and let $g$ be analytic on $H(\delta)$ for some $0<\delta<1$, satisfying
\begin{equation}\label{eq:growth-A}
|g(v+iw)|\le Ce^{Pv+A|w|}
\end{equation}
for $v+iw\in H(\delta)$ and constants $C,P\in\mathbb{R}$ and $A<m\pi$. Then the series in \eqref{eq:conjecture} converges absolutely for $0<x<e^{-P}$, extends to $(0,\infty)$ by the inverse Mellin integral, and the identity \eqref{eq:conjecture} holds for all $0<\re(s)<\delta$.
\end{theorem}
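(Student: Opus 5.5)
The plan follows the residue/Mellin-inversion architecture of Hardy's proof, with $\pi/\sin(\pi z)$ replaced by $\pi^m/\sin^m(\pi z)$. Set
\[
F(s)=\frac{(-1)^{m-1}(m-1)!\,\pi^m}{\sin^m(\pi s)}\,g(-s),
\]
which is meromorphic on $\re(s)<\delta$ with poles of order at most $m$ at $s=0,-1,-2,\dots$. On the strip $0<\re(s)<\delta$ we have $|\sin(\pi s)|^{-m}\asymp e^{-m\pi|\im s|}$, so the hypothesis $A<m\pi$ makes $F$ decay exponentially there. We define
\[
f(x)=\frac{1}{2\pi i}\int_{c-i\infty}^{c+i\infty}F(s)\,x^{-s}\,ds,\qquad 0<c<\delta .
\]
This integral converges absolutely for every $x>0$, and Mellin inversion for absolutely integrable, exponentially decaying $F$ gives $\int_0^\infty x^{s-1}f(x)\,dx=F(s)$ on the strip. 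It then remains to show that $f(x)$ equals the series in \eqref{eq:conjecture} for $0<x<e^{-P}$. That identification is exactly the statement that the series ``extends to $(0,\infty)$ by the inverse Mellin integral.''

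\textbf{Step 1: shift the contour left.} Move the line $\re(s)=c$ to $\re(s)=-N-\tfrac12$, closing with horizontal segments at $\im s=\pm T$. On the horizontal segments $|F(s)x^{-s}|\lesssim e^{(A-m\pi)T}$ times factors polynomial in $N$, so these contributions vanish as $T\to\infty$. The residue theorem then gives
\[
f(x)=\sum_{n=0}^{N}\Res{s=-n}\bigl(F(s)x^{-s}\bigr)+\frac{1}{2\pi i}\int_{\re s=-N-1/2}F(s)\,x^{-s}\,ds .
\]
On $\re s=-N-\tfrac12$ we have $|\sin(\pi s)|\ge\cosh(\pi\im s)$ and $|g(-s)|\le Ce^{P(N+1/2)+A|\im s|}$. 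Hence the remainder is $O\bigl((xe^{P})^{N+1/2}\bigr)$, which tends to $0$ when $x<e^{-P}$. The same estimate, or Cauchy estimates for derivatives of $g$ on discs around $n$, shows that the residue series converges absolutely.

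\textbf{Step 2: compute the residues.} Substituting $s=-n+u$ and using $\sin^m(\pi(u-n))=(-1)^{mn}\sin^m(\pi u)$, the residue at $-n$ is
\[
(-1)^{mn}(-1)^{m-1}(m-1)!\,x^{n}\,[u^{-1}]\,\frac{\pi^m}{\sin^m(\pi u)}\,g(n-u)\,e^{-u\log x}.
\]
Expanding $g(n-u)e^{-u\log x}=\sum_k\frac{(-u)^k}{k!}\bigl(\tfrac{d}{dz}+\log x\bigr)^k g(z)\big|_{z=n}$, the residue equals $(-1)^{mn}\bigl[Q_m(\tfrac{d}{dz}+\log x)g\bigr]_{z=n}x^n$, where
\[
Q_m(y)=(-1)^{m-1}(m-1)!\sum_{k=0}^{m-1}\frac{(-1)^k c_{-k-1}}{k!}\,y^k,
\]
and $c_j$ are the Laurent coefficients of $\pi^m\csc^m(\pi u)$ at $0$. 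The theorem therefore reduces to the Laurent identity $Q_m=P_m$. Equivalently, the principal part of $\pi^m\csc^m(\pi u)$ is given by Airault's polynomials, i.e.
\[
\text{principal part of } \frac{\pi^m}{\sin^m(\pi u)}=\frac{(-1)^{m-1}}{(m-1)!}\,P_m\bigl(-\tfrac{d}{du}\bigr)\frac1u .
\]

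\textbf{Step 3 (main obstacle): prove the Laurent identity by induction.} With $\phi_m=\pi^m\csc^m(\pi u)$, differentiating twice and using $\cot^2=\csc^2-1$ gives
\[
\phi_m''=m^2\pi^2\phi_m+m(m+1)\pi^{-2}\phi_{m+2}.
\]
After reindexing this reads
\[
(m-1)m\,\pi^{-2}\phi_m=\Bigl(\frac{d^2}{du^2}-(m-2)^2\pi^2\Bigr)\phi_{m-2}.
\]
Taking principal parts, which commutes with differentiation and with multiplication by constants, and applying the inductive hypothesis for $m-2$ yields
\[
\Bigl(\frac{d^2}{du^2}-(m-2)^2\pi^2\Bigr)P_{m-2}\bigl(-\tfrac{d}{du}\bigr)\frac1u .
\]
The sign bookkeeping with $(-1)^{m-1}/(m-1)!$ matches the recursion $P_m(y)=(y^2+(m-2)^2\pi^2)P_{m-2}(y)$ after the substitution $y=-d/du$. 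The delicate point here is that $d^2/du^2$ applied to the principal part of order $m-2$ produces terms of order $m$, while the regular part of $\phi_{m-2}$ is differentiated away and contributes nothing singular. The base cases are $\phi_1=1/u+O(u)$, giving $P_1=1$, and $\phi_2=1/u^2+O(1)$, giving $P_2=y$. The only remaining checks are the constants and signs in the recursion, which I would verify for $m=3$ ($P_3=y^2+\pi^2$) before trusting them.

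Combining Steps 1–3 identifies $f(x)$ with the series of \eqref{eq:conjecture} for $0<x<e^{-P}$, and the Mellin inversion above then gives \eqref{eq:conjecture} on $0<\re(s)<\delta$.
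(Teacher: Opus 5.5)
Your overall architecture matches the paper's. You reduce everything to the principal part of $\pi^m\csc^m(\pi u)$ through the second-order recursion for $\csc^m$, move the computation from $0$ to $-n$ by periodicity, and finish with a contour argument and Mellin inversion. Steps 1 and 2 are sound, including your formula for $Q_m$. Your contour in Step 1 genuinely differs from the paper's Hardy semicircle. You shift the line to $\re(s)=-N-\tfrac12$, where $|\sin(\pi s)|=\cosh(\pi\im(s))$ holds exactly. The remainder bound $O\bigl((xe^{P})^{N+1/2}\bigr)$ is then immediate, and the paper's arc estimate, with its case split near the real axis, is not needed. The only cost is an extra limit $T\to\infty$ on the horizontal segments, which is harmless. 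This is a valid and arguably cleaner variant.

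The gap is in Step 3, which carries all the content. The errors sit exactly in what you deferred as ``constants and signs.''

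First, the recursion itself is wrong. The correct identity is $\phi_m''=m(m+1)\phi_{m+2}-m^2\pi^2\phi_m$; at $m=1$ it reads $\csc''=2\csc^3-\csc$. There is no factor $\pi^{-2}$, since both sides scale like $\pi^{m+2}$, and the $m^2\pi^2$ term carries a minus sign. Reindexing at $k=m-2$ gives $(m-2)(m-1)\phi_m=\bigl(\tfrac{d^2}{du^2}+(m-2)^2\pi^2\bigr)\phi_{m-2}$; note $k(k+1)=(m-2)(m-1)$, not $(m-1)m$. With your version, $(m-1)m\pi^{-2}\phi_m=\bigl(\tfrac{d^2}{du^2}-(m-2)^2\pi^2\bigr)\phi_{m-2}$, the substitution $y=-\tfrac{d}{du}$ produces the factor $y^2-(m-2)^2\pi^2$ and a non-unit prefactor. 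So the claimed match with $P_m(y)=\bigl(y^2+(m-2)^2\pi^2\bigr)P_{m-2}(y)$ is false as written, and that sign is the whole content of the Airault recursion, not bookkeeping.

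Second, your ``equivalently'' display is off by a factor $(-1)^{m-1}$. The principal part is $\frac{1}{(m-1)!}P_m\bigl(-\tfrac{d}{du}\bigr)\frac1u$, or equivalently $\frac{(-1)^{m-1}}{(m-1)!}P_m\bigl(\tfrac{d}{du}\bigr)\frac1u$ by the parity of $P_m$. Your formula gives $-u^{-2}$ at $m=2$, which contradicts your own correct base case $P_2=y$. The check at $m=3$ you proposed cannot catch this, because $(-1)^{m-1}=1$ for odd $m$.

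With both corrections the induction closes exactly, using $\frac{1}{(m-2)(m-1)}\cdot\frac{1}{(m-3)!}=\frac{1}{(m-1)!}$ and $\bigl(-\tfrac{d}{du}\bigr)^2=\tfrac{d^2}{du^2}$. This is the paper's principal-part lemma in operator form. The paper runs the same induction coefficientwise through $p_{m+2,j}=p_{m,j-2}+m^2\pi^2p_{m,j}$.
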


The proof reduces the conjecture to a single Laurent-series identity. The principal part of $\pi^m\csc^m(\pi w)$ at $w=0$ is generated by the coefficients of $P_m$ (Lemma~\ref{lem:principal-part}). This identity is in turn a consequence of the elementary differential recursion
\begin{equation}
\frac{d^2}{du^2}\csc^m(u)=m(m+1)\csc^{m+2}(u)-m^2\csc^m(u),
\end{equation}
which under the residue calculus translates \emph{exactly} into the Airault recursion \eqref{eq:airault-rec}. The periodicity of the sine then transports the computation from $w=0$ to every pole (Lemma~\ref{lem:residue}), and Hardy's contour argument, with the arc estimate strengthened by the $m$-fold exponential decay of $\csc^m$ off the real axis, completes the proof.

Already the case $g=1$ carries nontrivial content. For odd $m$, it produces the evaluation
\begin{equation}
\int_0^\infty x^{s-1} \frac{P_m(\log(x))}{1+x} dx=\frac{(m-1)! \pi^m}{\sin^m(\pi s)},
\qquad 0<\re(s)<1,
\end{equation}
and for even $m$, the corresponding evaluation with $1-x$ in the denominator, in which the singularity of the integrand at $x=1$ is removable because $P_m$ vanishes at the origin. These evaluations identify the Mellin convolution powers of the Cauchy kernel in closed form. Writing $K_1(x)=(1+x)^{-1}$ and $K_m=K_{m-1}\star K_1$, where $\star$ denotes multiplicative convolution, we prove
\begin{equation}
K_m(x)=\frac{(-1)^{m-1}}{(m-1)!}\cdot\frac{P_m(\log(x))}{1-(-1)^mx},
\end{equation}
a formula which we have not found in the literature, although the underlying density is classical (see Remark~\ref{rem:probabilistic}). The same circle of ideas yields the differential identities
\begin{equation}
P_m\Bigl(\frac{d}{ds}\Bigr)\Bigl[\frac{\pi}{\sin(\pi s)}\Bigr]=\frac{(-1)^{m-1}(m-1)! \pi^m}{\sin^m(\pi s)}
\end{equation}
for odd $m$, together with even counterparts for the cotangent, and these verify Theorem~\ref{thm:A} independently when $g=1$. A further specialization promotes equations (5.24) to (5.26) of~\cite{bradshaw2025generalized} from experimental to proven.

The paper is organized as follows. Section~\ref{sec:prelim} collects notation, the basic properties of the polynomials $P_m$, and the contour estimates. Section~\ref{sec:proof} proves Theorem~\ref{thm:A}. Section~\ref{sec:applications} develops the special cases and applications.

\section{Preliminaries}\label{sec:prelim}

\subsection{Notation and the Hardy contour}

Throughout, $x>0$ is real and $x^{-z}=e^{-z\log(x)}$ with the real logarithm. We write $z=v+iw$ with $v,w\in\mathbb{R}$, and $H(\delta)=\{z:\re(z)\ge-\delta\}$. We abbreviate the commuting operators
\begin{equation}
D=\frac{d}{dz},\qquad L=\log(x),
\end{equation}
so that for $g$ analytic near $z=n$, 
\begin{equation}
    [P(D+L)g(z)]_{z=n}=\sum_j p_j\sum_{k=0}^{j}\binom{j}{k}(\log(x))^{j-k}g^{(k)}(n)
\end{equation}
for a polynomial $P(x)=\sum_j p_j x^j$.

Fix $0<c<\delta<1$. The \emph{Hardy contour} of index $N\in\mathbb{N}$ is the closed curve consisting of the vertical segment from $c-iR_N$ to $c+iR_N$, traversed upward, together with the left semicircle
\begin{equation}
\gamma_N=\{z=c+R_Ne^{i\theta}:\ \theta\in[\tfrac{\pi}{2},\tfrac{3\pi}{2}]\},
\qquad
R_N=N+\tfrac12+c,
\end{equation}
traversed so that the region $\{|z-c|<R_N,\ \re(z)<c\}$ is enclosed with positive orientation. The radius is chosen so that $\gamma_N$ meets the real axis at $c-R_N=-N-\tfrac12$, a half-integer. The poles of $\csc^m(\pi z)$ enclosed by the contour of index $N$ are therefore exactly $z=0,-1,\dots,-N$.

\subsection{The Airault polynomials}

The polynomials $P_m$ are defined by the recursion \eqref{eq:airault-rec}. The first cases are $P_3(x)=x^2+\pi^2$, $P_4(x)=x^3+4\pi^2x$,
$P_5(x)=x^4+10\pi^2x^2+9\pi^4$, and $P_6(x)=x^5+20\pi^2x^3+64\pi^4x$.
Let us write
\begin{equation}
P_m(x)=\sum_{j=0}^{m-1}p_{m,j} x^j .
\end{equation}

\begin{proposition}\label{prop:airault}
For every $m\ge1$ the following hold.
\begin{enumerate}
\item[(i)] $P_m$ is monic of degree $m-1$; in particular $p_{m,m-1}=1$.
\item[(ii)] $P_m(-x)=(-1)^{m-1}P_m(x)$; equivalently $p_{m,j}=0$ unless $j\equiv m-1\pmod 2$.
\item[(iii)] $P_m$ factors over $\mathbb{C}$ as
\begin{equation}
P_m(x)=x^{\varepsilon_m}\prod_{\substack{1\le j\le m-2\\ j\equiv m\ (\mathrm{mod}\ 2)}}\bigl(x^2+j^2\pi^2\bigr),
\qquad \varepsilon_m=\begin{cases}0,& m\ \text{odd},\\ 1,& m\ \text{even},\end{cases}
\end{equation}
so that the zeros of $P_m$ are simple and located at $x=ij\pi$ for $j\equiv m\pmod 2$ and $|j|\le m-2$, including $j=0$ when $m$ is even.
\item[(iv)] The coefficients satisfy, for $m\ge1$ and all $j\in\mathbb{Z}$, with the convention $p_{m,j}=0$ for $j<0$ or $j>m-1$,
\begin{equation}\label{eq:coeff-rec}
p_{m+2,j}=p_{m,j-2}+m^2\pi^2 p_{m,j}.
\end{equation}
\end{enumerate}
\end{proposition}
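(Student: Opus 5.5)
The plan is to prove (iii) first by induction on $m$, separately along the odd and even residue classes, and then read off (i), (ii), and (iv) from (iii) and the recursion \eqref{eq:airault-rec}.

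\emph{Proof of (iii).} The base cases are $P_1=1$, which is the empty product with $\varepsilon_1=0$, and $P_2=x$, which is $x^{1}$ times the empty product. For the inductive step, assume (iii) holds for $P_{m-2}$ with $m>2$. The recursion multiplies $P_{m-2}$ by $x^2+(m-2)^2\pi^2$. The index set for $m$ is $\{1\le j\le m-2,\ j\equiv m \pmod 2\}$, and it equals the index set for $m-2$ with the single element $j=m-2$ adjoined. Moreover $\varepsilon_m=\varepsilon_{m-2}$, so the factorization propagates. The zeros of $x^2+j^2\pi^2$ are $\pm ij\pi$, and the values $j\ge1$ in the index set are distinct. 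When $m$ is even the factor $x$ contributes the simple zero $0$, which is distinct from the others. This gives the description of the zeros, which are all simple.

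\emph{Proof of (i) and (ii).} Each quadratic factor is monic and even. The prefactor $x^{\varepsilon_m}$ is monic and has parity $(-1)^{\varepsilon_m}$. The number of quadratic factors is $\lfloor (m-1)/2\rfloor$, so the degree is $\varepsilon_m+2\lfloor (m-1)/2\rfloor=m-1$. This gives (i). For (ii), $P_m(-x)=(-1)^{\varepsilon_m}P_m(x)$, and $(-1)^{\varepsilon_m}=(-1)^{m-1}$ because $\varepsilon_m\equiv m-1\pmod 2$. The coefficient statement is equivalent: an odd or even polynomial has only odd or only even powers, respectively.

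\emph{Proof of (iv).} Apply the recursion with $m$ replaced by $m+2$: $P_{m+2}(x)=(x^2+m^2\pi^2)P_m(x)$. Compare coefficients of $x^j$. The term $x^2P_m$ contributes $p_{m,j-2}$ and the term $m^2\pi^2P_m$ contributes $m^2\pi^2p_{m,j}$. With the zero-padding convention, this identity holds for all $j\in\mathbb{Z}$, including the boundary cases $j<2$ and $j>m-1$.

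No step is a real obstacle. The only points needing care are the bookkeeping of the index set and of $\varepsilon_m$ in the induction for (iii), and checking that the zero-padding convention makes \eqref{eq:coeff-rec} valid at the edges.
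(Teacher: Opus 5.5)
Your proposal is correct and takes essentially the paper's approach. The paper also argues by induction on $P_{m+2}(x)=(x^2+m^2\pi^2)P_m(x)$ from $P_1=1$ and $P_2=x$, notes that the appended factor preserves the parity class, and reads (iv) off coefficientwise; your only change, deducing (i) and (ii) from the factorization (iii) rather than by separate inductions, is a harmless reorganization.
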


\begin{proof}
All four statements are immediate inductions on the recursion $P_{m+2}(x)=(x^2+m^2\pi^2)P_m(x)$, anchored at $P_1=1$ and $P_2=x$. For (iii), the factor appended at step $m\to m+2$ is $x^2+m^2\pi^2$, and $m\equiv m+2\pmod2$; the base cases carry $\varepsilon_1=0$ and $\varepsilon_2=1$. Statement (iv) is the recursion read off coefficientwise.
\end{proof}

\subsection{The cosecant estimate on the Hardy contour}
 
\begin{lemma}\label{lem:csc-bound}
There is a constant $K$ such that for all $0<c<1$, all $N\ge1$, and all $z=v+iw$ on the arc $\gamma_N$,
\begin{equation}\label{eq:csc-bound}
|\csc(\pi z)|\le K e^{-\pi|w|}.
\end{equation}
\end{lemma}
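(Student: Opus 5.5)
We need $|\sin(\pi z)|\ge K^{-1}e^{\pi|w|}$ on $\gamma_N$, uniformly in $c\in(0,1)$ and $N\ge1$. The plan is to split the arc into a part far from the real axis and a part near the real axis, handling each with an elementary lower bound for $|\sin(\pi z)|$.

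The starting point is the identity
\[
|\sin(\pi(v+iw))|^2=\sin^2(\pi v)+\sinh^2(\pi w).
\]
It gives $|\sin(\pi z)|\ge|\sinh(\pi w)|$, and also $|\sin(\pi z)|\ge\tfrac12 e^{\pi|w|}\bigl(1-e^{-2\pi|w|}\bigr)$. On the region $|w|\ge1$ this yields $|\sin(\pi z)|\ge\tfrac12(1-e^{-2\pi})e^{\pi|w|}$, which is the desired bound with an absolute constant. This region needs no information about the contour at all.

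The main step is the region $|w|<1$. There it suffices to show that $|\sin(\pi v)|$ is bounded below uniformly, since $e^{\pi|w|}<e^{\pi}$ then absorbs the exponential. On the arc, $v=c+R_N\cos\theta$ with $\cos\theta=-\sqrt{1-w^2/R_N^2}$. Hence
\[
v=c-R_N\sqrt{1-w^2/R_N^2}=-N-\tfrac12+\frac{R_N\,w^2/R_N^2}{1+\sqrt{1-w^2/R_N^2}}.
\]
So $v$ differs from the half-integer $-N-\tfrac12$ by at most $w^2/R_N<1/R_N$. Since $R_N\ge N+\tfrac12\ge\tfrac32$, this deviation is at most $2/3$, which is not small enough to conclude directly.

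To fix this I will split once more. When $|w|\le\tfrac12$, the deviation is at most $\tfrac14\cdot\tfrac23=\tfrac16$. Then $v$ lies within $1/6$ of a half-integer, so $|\sin(\pi v)|\ge\cos(\pi/6)=\sqrt3/2$. When $\tfrac12\le|w|<1$, I use instead $|\sin(\pi z)|\ge\sinh(\pi/2)$ directly. In both subcases $|\sin(\pi z)|\ge\kappa$ for an absolute constant $\kappa>0$, so $|\csc(\pi z)|\le\kappa^{-1}\le\kappa^{-1}e^{\pi}e^{-\pi|w|}$.

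Finally I take $K$ to be the maximum of the constants from the three regions; it is independent of $c$ and $N$.

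The only delicate point is the uniformity of the near-axis estimate. It relies on the choice $R_N=N+\tfrac12+c$, which pins the crossing point of $\gamma_N$ with the real axis at a half-integer regardless of $c$, and on $R_N$ being bounded below, so that the arc's horizontal drift $w^2/R_N$ stays small for small $|w|$.
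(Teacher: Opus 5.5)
Your proof is correct and follows essentially the same route as the paper: the modulus identity $|\sin(\pi z)|^2=\sin^2(\pi v)+\sinh^2(\pi w)$, the $\sinh$ lower bound away from the real axis, and, near the axis, the observation that the arc stays within $w^2/R_N\le 1/6$ of the half-integer $-N-\tfrac12$. The only difference is your intermediate region $\tfrac12\le|w|<1$, which is unnecessary: $\sinh(\pi|w|)\ge\tfrac12(1-e^{-\pi})e^{\pi|w|}$ already holds for all $|w|\ge\tfrac12$, and the paper uses exactly that threshold.
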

 
\begin{proof}
From $\sin(\pi z)=\sin(\pi v)\cos(i\pi w)+\cos(\pi v)\sin(i\pi w)$, one has the standard identity
\begin{equation}\label{eq:sin-modulus}
|\sin(\pi z)|^2=\sin^2(\pi v)+\sinh^2(\pi w).
\end{equation}
\emph{Case 1, $|w|\ge\tfrac12$.} Then $|\sin(\pi z)|\ge\sinh(\pi|w|)=\tfrac12(e^{\pi|w|}-e^{-\pi|w|})\ge\tfrac12(1-e^{-\pi})e^{\pi|w|}$, so
\begin{equation}
|\csc(\pi z)|\le\frac{2}{1-e^{-\pi}} e^{-\pi|w|}.
\end{equation}
\emph{Case 2, $|w|\le\tfrac12$.} Write $z=c+R_Ne^{i\theta}$ with $\theta\in[\pi/2,3\pi/2]$, so $w=R_N\sin(\theta)$ and $v=c+R_N\cos(\theta)$ with $\cos(\theta)\le0$. From $|w|\le\tfrac12$ we get $|\sin(\theta)|\le 1/(2R_N)$, hence
\begin{equation}
0\le 1+\cos(\theta) = 1-\sqrt{1-\sin^2(\theta)}\le\sin^2(\theta)\le\frac{1}{4R_N^2},
\end{equation}
using $1-\sqrt{1-t}\le t$ for $t\in[0,1]$. Therefore
\begin{equation}
-N-\tfrac12=c-R_N \le v=c-R_N+R_N(1+\cos(\theta)) \le c-R_N+\frac{1}{4R_N} \le -N-\tfrac12+\frac{1}{4R_N},
\end{equation}
and since $R_N\ge\tfrac32$, the distance from $v$ to the nearest integer is at least $\tfrac12-\tfrac16=\tfrac13>\tfrac14$. Consequently $|\sin(\pi v)|\ge\sin(\pi/4)=1/\sqrt2$, and by \eqref{eq:sin-modulus},
\begin{equation}
|\csc(\pi z)|\le\sqrt2\le\sqrt2 e^{\pi/2}e^{-\pi|w|},
\end{equation}
since $e^{-\pi|w|}\ge e^{-\pi/2}$ in this case. Both cases give \eqref{eq:csc-bound} with $K=\max\bigl\{2/(1-e^{-\pi}),\ \sqrt{2} e^{\pi/2}\bigr\}$, which is independent of $c$, $N$, and $z$.
\end{proof}
 
\begin{remark}\label{rem:vertical}
The two-case argument also yields a bound on vertical segments, with a constant now depending on $c$. On the segment $\re(z)=c$ with $0<c<1$ we will use
\begin{equation}
|\csc(\pi z)|\le \max\Bigl\{e^{\pi/2}\csc(\pi c), \frac{2}{1-e^{-\pi}}\Bigr\} e^{-\pi|w|},
\end{equation}
which follows from \eqref{eq:sin-modulus} by the same split. For $|w|\le\tfrac12$ one has $|\sin(\pi z)|\ge|\sin(\pi c)|$ and $e^{-\pi|w|}\ge e^{-\pi/2}$, while Case~1 applies verbatim for $|w|\ge\tfrac12$.
\end{remark}

\section{Proof of the theorem}\label{sec:proof}

Throughout this section, we fix an integer $m\ge1$ and write
\begin{equation}\label{eq:Cm-def}
C_m(w)=\frac{\pi^m}{\sin^m(\pi w)},
\qquad
h_m(z)=(-1)^{m-1}(m-1)! C_m(z),
\end{equation}
so that $h_1(z)=\pi/\sin(\pi z)$ is Hardy's kernel and $h_m$ is the kernel on the right side of \eqref{eq:conjecture}. The function $C_m$ is meromorphic on $\mathbb{C}$ with poles of exact order $m$ at each integer and no other singularities. We denote by
\begin{equation}\label{eq:laurent-def}
C_m(w)=\sum_{r=-m}^{\infty}c^{(m)}_r w^r,\qquad 0<|w|<1,
\end{equation}
its Laurent expansion at $w=0$, and we extend the notation by setting $c^{(m)}_r=0$ for $r<-m$.

\subsection{The principal part of $C_m$}

\begin{lemma}[Parity]\label{lem:parity}
$C_m(-w)=(-1)^mC_m(w)$; consequently $c^{(m)}_r=0$ unless $r\equiv m\pmod 2$.
\end{lemma}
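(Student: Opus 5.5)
The parity claim follows from the oddness of the sine, so the plan is a direct computation followed by comparing Laurent coefficients.

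First, since $\sin(-\pi w)=-\sin(\pi w)$, the definition \eqref{eq:Cm-def} gives
\begin{equation*}
C_m(-w)=\frac{\pi^m}{(-\sin(\pi w))^m}=(-1)^m\frac{\pi^m}{\sin^m(\pi w)}=(-1)^mC_m(w)
\end{equation*}
for every $w$ that is not an integer. This is an identity of meromorphic functions on $\mathbb{C}$.

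Second, for the coefficient statement, I would substitute $-w$ into the Laurent expansion \eqref{eq:laurent-def}. The punctured disc $0<|w|<1$ is symmetric under $w\mapsto-w$, so the substitution is valid there and gives $C_m(-w)=\sum_{r\ge -m}(-1)^r c^{(m)}_r w^r$. The first step gives a second expansion on the same annulus, namely $C_m(-w)=\sum_{r\ge -m}(-1)^m c^{(m)}_r w^r$.

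Laurent expansions on an annulus are unique, so the coefficients of the two series agree: $(-1)^r c^{(m)}_r=(-1)^m c^{(m)}_r$ for every $r$. If $r\not\equiv m\pmod 2$, this reads $c^{(m)}_r=-c^{(m)}_r$, hence $c^{(m)}_r=0$. For $r<-m$ the coefficient is zero by the stated convention, so the claim holds for all $r\in\mathbb{Z}$.

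Nothing here is a genuine obstacle. The only point needing care is to invoke uniqueness of Laurent coefficients on the symmetric punctured disc, rather than arguing with any finite truncation of the series.
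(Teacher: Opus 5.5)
Your proposal is correct and follows the paper's own argument: use the oddness of the sine to get $C_m(-w)=(-1)^mC_m(w)$, then compare the two Laurent expansions on the symmetric punctured disc $0<|w|<1$. Your explicit appeal to uniqueness of Laurent coefficients, giving $(-1)^r c^{(m)}_r=(-1)^m c^{(m)}_r$, is just the detailed form of the paper's one-line coefficient comparison.
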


\begin{proof}
Immediate from the oddness of the sine. The coefficient statement follows by comparing the expansions of $C_m(-w)$ and $(-1)^mC_m(w)$.
\end{proof}

\begin{lemma}[Differential recursion]\label{lem:ode}
For every $m\ge1$,
\begin{equation}\label{eq:ode}
C_{m+2}(w)=\frac{C_m''(w)+m^2\pi^2 C_m(w)}{m(m+1)} .
\end{equation}
\end{lemma}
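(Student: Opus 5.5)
The plan is to establish the identity for $\csc^m(u)$ by direct differentiation and then rescale to $u=\pi w$. Write $S=\csc(u)$. We have $S'=-S\cot(u)$, and the chain rule gives
\begin{equation*}
\frac{d}{du}S^m=-mS^m\cot(u).
\end{equation*}
Differentiate once more, using $\frac{d}{du}\cot(u)=-S^2$:
\begin{equation*}
\frac{d^2}{du^2}S^m=m^2S^m\cot^2(u)+mS^{m+2}.
\end{equation*}
Next we eliminate the cotangent. Substituting $\cot^2(u)=S^2-1$ yields
\begin{equation*}
\frac{d^2}{du^2}S^m=m(m+1)S^{m+2}-m^2S^m,
\end{equation*}
which is the recursion announced in the introduction. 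This holds as an identity of meromorphic functions on $\mathbb{C}$, since every step is valid away from the integers multiplied by $\pi$, and both sides are meromorphic.

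To pass to $C_m$, we set $u=\pi w$, so that $C_m(w)=\pi^m\csc^m(\pi w)$. Then $C_m''(w)=\pi^{m+2}\,(\csc^m)''(\pi w)$. Multiplying the identity above (evaluated at $u=\pi w$) by $\pi^{m+2}$ gives
\begin{equation*}
C_m''(w)=m(m+1)C_{m+2}(w)-m^2\pi^2C_m(w).
\end{equation*}
Because $m(m+1)\neq0$ for $m\ge1$, we may solve for $C_{m+2}$, and this is exactly \eqref{eq:ode}.

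There is no real obstacle here. The only points needing care are the sign in $\frac{d}{du}\cot(u)=-\csc^2(u)$ and the bookkeeping of the powers of $\pi$ produced by the chain rule.
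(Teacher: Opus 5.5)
Your proposal is correct and matches the paper's proof step for step: differentiate $\csc^m(u)$ twice, eliminate $\cot^2(u)$ via $\cot^2(u)=\csc^2(u)-1$, then rescale with $u=\pi w$ to pick up the factor $\pi^{m+2}$ and solve for $C_{m+2}$.
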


\begin{proof}
Let $u=\pi w$ and $f_m(u)=\csc^m(u)$. Since $\frac{d}{du}\csc(u)=-\csc(u)\cot(u)$,
\begin{equation}
f_m'(u)=-m\csc^m(u) \cot(u),
\end{equation}
and differentiating once more, using $\frac{d}{du}\cot(u)=-\csc^2(u)$,
\begin{equation}
f_m''(u)=-m\Bigl[(-m\csc^m(u) \cot(u))\cot(u)+\csc^m(u) (-\csc^2(u))\Bigr]
=m^2\csc^m(u) \cot^2(u)+m\csc^{m+2}(u) .
\end{equation}
Substituting $\cot^2(u)=\csc^2(u)-1$,
\begin{equation}\label{eq:csc-ode}
f_m''(u)=m(m+1)\csc^{m+2}(u)-m^2\csc^m(u) .
\end{equation}
Since $C_m(w)=\pi^mf_m(\pi w)$, we have $C_m''(w)=\pi^{m+2}f_m''(\pi w)$, and \eqref{eq:csc-ode} gives
\begin{equation}
C_m''(w)=\pi^{m+2}\bigl[m(m+1)\csc^{m+2}(\pi w)-m^2\csc^m(\pi w)\bigr]
=m(m+1) C_{m+2}(w)-m^2\pi^2 C_m(w).
\end{equation}
Solving for $C_{m+2}$ proves \eqref{eq:ode}.
\end{proof}

\begin{lemma}[Coefficient recursion]\label{lem:coeff-laurent}
For every $m\ge1$ and every $j\ge0$,
\begin{equation}\label{eq:laurent-rec}
c^{(m+2)}_{-j-1}=\frac{j(j-1) c^{(m)}_{1-j}+m^2\pi^2 c^{(m)}_{-j-1}}{m(m+1)} .
\end{equation}
\end{lemma}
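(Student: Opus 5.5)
The plan is to compare coefficients of $w^{-j-1}$ on the two sides of the differential recursion of Lemma~\ref{lem:ode}, written as
\begin{equation*}
m(m+1)\,C_{m+2}(w)=C_m''(w)+m^2\pi^2 C_m(w).
\end{equation*}
All three functions are meromorphic at $w=0$ with Laurent expansions valid in $0<|w|<1$. In particular $C_m''$ has the expansion obtained by termwise differentiation of \eqref{eq:laurent-def}, since Laurent series may be differentiated termwise inside their annulus of convergence. By uniqueness of Laurent coefficients, the coefficient identity holds for every index.

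First I compute the second derivative termwise:
\begin{equation*}
C_m''(w)=\sum_{r} r(r-1)\,c^{(m)}_r\,w^{r-2}.
\end{equation*}
Its coefficient of $w^{-j-1}$ comes from $r-2=-j-1$, that is $r=1-j$. That coefficient is $(1-j)(-j)\,c^{(m)}_{1-j}=j(j-1)\,c^{(m)}_{1-j}$.

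The coefficient of $w^{-j-1}$ in $m^2\pi^2 C_m$ is $m^2\pi^2 c^{(m)}_{-j-1}$, and in $m(m+1)C_{m+2}$ it is $m(m+1)c^{(m+2)}_{-j-1}$. Equating these and dividing by $m(m+1)\neq0$ gives \eqref{eq:laurent-rec}.

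The only point needing care is the convention $c^{(m)}_r=0$ for $r<-m$. Consider indices where $1-j<-m$ or $-j-1<-m$. Such terms are genuinely absent from the expansions, so the termwise formula, extended with zeros, remains correct. The sums over $r$ can be taken over all of $\mathbb{Z}$ with these zero coefficients. For $j\ge0$ the index $r=1-j$ may be positive ($j=0$) or nonnegative ($j=1$), but the formula still holds because $j(j-1)=0$ in exactly those cases. This is consistent with differentiation killing the constant and linear terms.

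There is no real obstacle here. The statement is just the coefficientwise reading of Lemma~\ref{lem:ode}, and the only bookkeeping is matching the shifted index $r=1-j$.
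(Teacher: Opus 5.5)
Your proposal is correct and follows essentially the same route as the paper: differentiate the Laurent series of $C_m$ termwise, compare coefficients in the recursion of Lemma~\ref{lem:ode}, and read off the index $r=1-j$ (the paper compares the coefficient of a general $w^r$ and then sets $r=-j-1$). Your extra remark on $j\in\{0,1\}$ is harmless but not needed, since $r(r-1)c^{(m)}_r$ at $r=1-j$ already equals $j(j-1)\,c^{(m)}_{1-j}$ for every $j$.
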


\begin{proof}
Laurent series may be differentiated termwise on their annulus of convergence, so
\begin{equation}
C_m''(w)=\sum_{r\ge-m}r(r-1) c^{(m)}_rw^{r-2}=\sum_{r\ge-m-2}(r+2)(r+1) c^{(m)}_{r+2} w^{r}.
\end{equation}
Comparing coefficients of $w^r$ in \eqref{eq:ode} yields $m(m+1) c^{(m+2)}_r=(r+2)(r+1) c^{(m)}_{r+2}+m^2\pi^2 c^{(m)}_r$ for all $r$. Setting $r=-j-1$, so that $(r+2)(r+1)=(1-j)(-j)=j(j-1)$ and $c^{(m)}_{r+2}=c^{(m)}_{1-j}$, gives \eqref{eq:laurent-rec}.
\end{proof}

\begin{lemma}[Principal part of $C_m$]\label{lem:principal-part}
For every $m\ge1$ and every $j\ge0$,
\begin{equation}\label{eq:principal-part}
c^{(m)}_{-j-1}=(-1)^{m-1+j} \frac{j!}{(m-1)!} p_{m,j},
\end{equation}
with the convention $p_{m,j}=0$ for $j\ge m$ from Proposition~\ref{prop:airault}. Equivalently, the principal part of $C_m$ at $w=0$ is
\begin{equation}
\frac{(-1)^{m-1}}{(m-1)!}\sum_{j=0}^{m-1}(-1)^j j! p_{m,j} w^{-j-1}.
\end{equation}
\end{lemma}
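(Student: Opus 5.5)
Let $d_{m,j}$ denote the right-hand side of \eqref{eq:principal-part}, extended by zero for $j\ge m$ and $j<0$. We prove $c^{(m)}_{-j-1}=d_{m,j}$ for all $j\ge0$ by induction on $m$ in steps of two, anchored at $m=1$ and $m=2$. The inductive step uses Lemma~\ref{lem:coeff-laurent} together with the coefficient recursion \eqref{eq:coeff-rec} of Proposition~\ref{prop:airault}. For $j\ge m$ the claim reads $c^{(m)}_{-j-1}=0$, which is just the statement that the pole has order $m$. So the content lies in $0\le j\le m-1$, although the induction below handles all $j$ uniformly.

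\emph{Base cases.} For $m=1$, $\pi/\sin(\pi w)=w^{-1}+O(w)$, so $c^{(1)}_{-1}=1=d_{1,0}$, and $c^{(1)}_{-j-1}=0$ for $j\ge1$. For $m=2$, $\pi^2/\sin^2(\pi w)=w^{-2}+\pi^2/3+O(w^2)$. This gives $c^{(2)}_{-2}=1$ and $c^{(2)}_{-1}=0$. The formula gives $d_{2,1}=(-1)^{1+1}\cdot1!\cdot p_{2,1}/1!=1$ and $d_{2,0}=0$, since $p_{2,0}=0$. The two agree.

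\emph{Inductive step.} Assume the formula for $m$ and all $j$. Since $c^{(m)}_{1-j}$ with $j\ge 2$ is a principal-part coefficient, we have $c^{(m)}_{1-j}=d_{m,j-2}$. Lemma~\ref{lem:coeff-laurent} then gives
\begin{equation*}
m(m+1)c^{(m+2)}_{-j-1}=j(j-1)\,d_{m,j-2}+m^2\pi^2 d_{m,j}.
\end{equation*}
This requires care for $j=0,1$, where $c^{(m)}_{1-j}$ is a regular (nonnegative-index) coefficient. There, however, the prefactor $j(j-1)$ vanishes, so those terms drop out regardless. This is the point where a careless argument could go wrong: the recursion mixes principal and regular parts, but only through coefficients killed by $j(j-1)$.

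Now substitute the formula for $d_{m,j-2}$. Since $(-1)^{j-2}=(-1)^j$, the first term becomes
\begin{equation*}
j(j-1)\,(-1)^{m-1+j}\frac{(j-2)!}{(m-1)!}p_{m,j-2}=(-1)^{m-1+j}\frac{j!}{(m-1)!}p_{m,j-2}.
\end{equation*}
Summing with the second term gives
\begin{equation*}
(-1)^{m-1+j}\frac{j!}{(m-1)!}\bigl(p_{m,j-2}+m^2\pi^2p_{m,j}\bigr)=(-1)^{m-1+j}\frac{j!}{(m-1)!}\,p_{m+2,j},
\end{equation*}
by \eqref{eq:coeff-rec}. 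Dividing by $m(m+1)$ turns $(m-1)!$ into $(m+1)!$. The sign $(-1)^{m-1+j}$ equals $(-1)^{(m+2)-1+j}$. Hence the result is exactly $d_{m+2,j}$, which completes the induction.

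The equivalent form of the principal part follows by summing $c^{(m)}_{-j-1}w^{-j-1}$ over $0\le j\le m-1$. The main obstacle is only bookkeeping: tracking signs and factorials, and the $j=0,1$ boundary handled above.
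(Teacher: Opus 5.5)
Your proposal is correct and follows the paper's proof essentially step for step. Both arguments induct in steps of two from the base cases $m=1,2$, insert the inductive hypothesis into the Laurent coefficient recursion of Lemma~\ref{lem:coeff-laurent}, and close with the coefficient recursion \eqref{eq:coeff-rec}; both handle $j\in\{0,1\}$ by the vanishing of the factor $j(j-1)$.
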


\begin{proof}
We induct on $m$ in steps of two; the base cases $m=1$ and $m=2$ anchor the odd and even chains respectively.

\noindent\emph{Base case $m=1$.} From $\sin(\pi w)=\pi w\bigl(1-\tfrac{(\pi w)^2}{6}+O(w^4)\bigr)$ we get
\begin{equation}
C_1(w)=\frac{\pi}{\sin(\pi w)}=\frac1w\Bigl(1+\frac{(\pi w)^2}{6}+O(w^4)\Bigr)=w^{-1}+\frac{\pi^2}{6} w+O(w^3),
\end{equation}
where in the second equality, $1/(1-(\pi w)^2/6+O(w^4))$ was expanded as a geometric series. It follows that $c^{(1)}_{-1}=1$ and $c^{(1)}_{-j-1}=0$ for $j\ge1$. The right side of \eqref{eq:principal-part} at $m=1$ is $(-1)^{j} j! p_{1,j}$, which equals $1$ for $j=0$ since $P_1=1$, and $0$ for $j\ge1$ since $\deg P_1=0$. 

\noindent\emph{Base case $m=2$.} Squaring the expansion of $C_1$,
\begin{equation}
C_2(w)=C_1(w)^2=\Bigl(w^{-1}+\frac{\pi^2}{6}w+O(w^3)\Bigr)^2=w^{-2}+\frac{\pi^2}{3}+O(w^2),
\end{equation}
so $c^{(2)}_{-2}=1$, $c^{(2)}_{-1}=0$, and $c^{(2)}_{-j-1}=0$ for $j\ge2$. The right side of \eqref{eq:principal-part} at $m=2$ is $(-1)^{1+j}j! p_{2,j}$, which equals $1$ at $j=1$ as $p_{2,1}=1$, equals $0$ at $j=0$ as $p_{2,0}=0$, and equals $0$ for $j\ge2$. Both base cases hold.

\noindent\emph{Induction step.} Assume \eqref{eq:principal-part} holds at $m$ for all $j\ge0$. Fix $j\ge0$ and insert the inductive hypothesis into \eqref{eq:laurent-rec}. If $j\ge2$,
\begin{align*}
c^{(m+2)}_{-j-1}
&=\frac{1}{m(m+1)}\Bigl[j(j-1) (-1)^{m-1+(j-2)}\frac{(j-2)!}{(m-1)!} p_{m,j-2}
+m^2\pi^2 (-1)^{m-1+j}\frac{j!}{(m-1)!} p_{m,j}\Bigr]\\
&=\frac{(-1)^{m-1+j}}{(m-1)! m(m+1)}\Bigl[j(j-1)(j-2)! p_{m,j-2}+m^2\pi^2 j! p_{m,j}\Bigr]\\
&=(-1)^{m-1+j} \frac{j!}{(m+1)!}\Bigl[p_{m,j-2}+m^2\pi^2 p_{m,j}\Bigr]\\
&=(-1)^{(m+2)-1+j} \frac{j!}{\bigl((m+2)-1\bigr)!}\;p_{m+2,j},
\end{align*}
where the last line uses the coefficient recursion \eqref{eq:coeff-rec} of Proposition~\ref{prop:airault}(iv) and $(-1)^{m+1+j}=(-1)^{m-1+j}$. If $j\in\{0,1\}$, the factor $j(j-1)$ vanishes, and the same computation goes through with the first bracketed term absent, matching $p_{m+2,j}=m^2\pi^2p_{m,j}$, which is \eqref{eq:coeff-rec} with $p_{m,j-2}=0$. This establishes \eqref{eq:principal-part} at $m+2$ for all $j\ge0$ and completes the induction.

We note two consistency checks. For $j\ge m+2$, both $p_{m,j-2}$ and $p_{m,j}$ vanish, and the computation returns $c^{(m+2)}_{-j-1}=0$, consistent with the pole order of $C_{m+2}$. For $j=m$ and $j=m+1$, only the first bracketed term survives, and it propagates the leading and subleading data; in particular $c^{(m)}_{-m}=p_{m,m-1}=1$ for all $m$, as it must since $C_m(w)\sim w^{-m}$. Finally, \eqref{eq:principal-part} respects Lemma~\ref{lem:parity}, since $c^{(m)}_{-j-1}\ne0$ forces $-j-1\equiv m\pmod2$, that is $j\equiv m-1\pmod2$, which is precisely the parity constraint on $p_{m,j}$ from Proposition~\ref{prop:airault}(ii).
\end{proof}

\subsection{The residue lemma}

\begin{lemma}[Taylor expansion of the shifted test function]\label{lem:taylor}
Let $g$ be analytic on $H(\delta)$, let $n\ge0$ be an integer, and let $x>0$. Define
\begin{equation}
G_n(w)=g(n-w) e^{-w\log(x)} .
\end{equation}
Then $G_n$ is analytic on $|w|<n+\delta$, with Taylor expansion
\begin{equation}\label{eq:Gn-taylor}
G_n(w)=\sum_{j=0}^{\infty}\frac{(-1)^j}{j!}\Bigl[(D+L)^jg(z)\Bigr]_{z=n} w^j,
\end{equation}
where $D=\frac{d}{dz}$ and $L=\log(x)$.
\end{lemma}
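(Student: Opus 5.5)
First I establish analyticity on the disc. The map $w\mapsto n-w$ sends the disc $|w|<n+\delta$ into the half-plane $\re(z)>n-(n+\delta)=-\delta$. Indeed, if $|w|<n+\delta$ then $\re(n-w)\ge n-|w|>-\delta$. Since $g$ is analytic on $H(\delta)$, it is analytic on a neighbourhood of this open half-plane, so $g(n-w)$ is analytic on the disc. The factor $e^{-w\log(x)}$ is entire for fixed $x>0$. Hence $G_n$ is analytic on $|w|<n+\delta$. Because this disc is centred at $0$, the Taylor series of $G_n$ at $w=0$ converges to $G_n$ throughout the disc. It therefore suffices to identify the Taylor coefficients $G_n^{(j)}(0)/j!$.

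For the coefficients I would use the Leibniz rule. The chain rule gives $\frac{d^k}{dw^k}g(n-w)=(-1)^kg^{(k)}(n-w)$, and we also have $\frac{d^{j-k}}{dw^{j-k}}e^{-wL}=(-L)^{j-k}e^{-wL}$. Evaluating at $w=0$ yields
\begin{equation*}
G_n^{(j)}(0)=\sum_{k=0}^{j}\binom{j}{k}(-1)^kg^{(k)}(n)(-L)^{j-k}=(-1)^j\sum_{k=0}^j\binom{j}{k}L^{j-k}g^{(k)}(n).
\end{equation*}
Since $D$ and multiplication by the constant $L$ commute, the binomial theorem identifies the last sum with $[(D+L)^jg(z)]_{z=n}$. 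This matches the expansion recorded in the notation subsection for $P(x)=x^j$.

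A cleaner alternative is to write $G_n(w)=e^{-nL}\,\bigl(g(z)e^{zL}\bigr)\big|_{z=n-w}$, using $e^{-wL}=e^{-nL}e^{(n-w)L}$. One then notes that $D^j(ge^{zL})=e^{zL}(D+L)^jg$, which follows by induction on $j$. Differentiating $j$ times in $w$ produces the factor $(-1)^j$, and the prefactors cancel at $z=n$, giving the same coefficients.

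There is no real obstacle here. The only point needing care is the domain claim: the disc must lie inside the region where $g$ is analytic. The strict inequality $|w|<n+\delta$ guarantees this, because every point of the disc maps to the interior of $H(\delta)$.
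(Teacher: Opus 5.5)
Your proof is correct and is essentially the paper's argument. The paper forms the Cauchy product of the Taylor series of $g(n-w)$ and $e^{-wL}$ about $w=0$, which is the coefficient-level version of your Leibniz-rule computation, and it then uses the binomial theorem for the commuting $D$ and $L$ exactly as you do; your domain check $\re(n-w)\ge n-|w|>-\delta$ likewise matches the paper's justification of analyticity on $|w|<n+\delta$.
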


\begin{proof}
Analyticity on $|w|<n+\delta$ is clear, since $\re(n-w)>-\delta$ there. Multiply the two absolutely convergent expansions
\begin{equation}
g(n-w)=\sum_{k\ge0}\frac{g^{(k)}(n)}{k!}(-w)^k,
\qquad
e^{-wL}=\sum_{i\ge0}\frac{(-L)^i}{i!} w^i .
\end{equation}
The Cauchy product gives $G_n(w)=\sum_{j\ge0}a_jw^j$ with
\begin{equation}
a_j=\sum_{k+i=j}\frac{(-1)^kg^{(k)}(n)}{k!}\cdot\frac{(-1)^iL^i}{i!}
=\frac{(-1)^j}{j!}\sum_{k=0}^{j}\binom{j}{k}L^{ j-k}g^{(k)}(n)
=\frac{(-1)^j}{j!}\Bigl[(D+L)^jg(z)\Bigr]_{z=n},
\end{equation}
the final equality being the binomial theorem for the commuting operators $D$ and multiplication by the scalar $L$.
\end{proof}

\begin{lemma}[Residue lemma]\label{lem:residue}
Let $g$ be analytic on $H(\delta)$, let $x>0$, and let $n\ge0$ be an integer. Then
\begin{equation}\label{eq:residue-lemma}
\Res{z=-n}\Bigl(h_m(z) g(-z) x^{-z}\Bigr)
=(-1)^{mn}\Bigl[P_m\Bigl(\tfrac{d}{dz}+\log(x)\Bigr)g(z)\Bigr]_{z=n} x^n .
\end{equation}
\end{lemma}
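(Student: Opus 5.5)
The plan is to substitute $z=-n+w$, which moves the pole at $z=-n$ to the origin. Then I use periodicity of the sine to reduce the kernel to its expansion at $w=0$. After that, the residue is a finite pairing of the principal part from Lemma~\ref{lem:principal-part} with the Taylor coefficients from Lemma~\ref{lem:taylor}.

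First, since $\sin(\pi(w-n))=(-1)^n\sin(\pi w)$, we have $C_m(-n+w)=(-1)^{mn}C_m(w)$. Next, $g(-z)x^{-z}=g(n-w)\,x^{n}e^{-w\log(x)}=x^nG_n(w)$. Hence
\begin{equation*}
\Res{z=-n}\bigl(h_m(z)g(-z)x^{-z}\bigr)=(-1)^{m-1}(m-1)!\,(-1)^{mn}x^n\,\Res{w=0}\bigl(C_m(w)G_n(w)\bigr).
\end{equation*}
Lemma~\ref{lem:taylor} shows that $G_n$ is analytic on $|w|<n+\delta$, a disc that contains a punctured neighbourhood of $0$. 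So the residue is the coefficient of $w^{-1}$ in the product of the Laurent series \eqref{eq:laurent-def} with the Taylor series \eqref{eq:Gn-taylor}. That coefficient is
\begin{equation*}
\Res{w=0}\bigl(C_m G_n\bigr)=\sum_{j=0}^{m-1}c^{(m)}_{-j-1}\,\frac{(-1)^j}{j!}\bigl[(D+L)^jg(z)\bigr]_{z=n}.
\end{equation*}
Only the terms $j\le m-1$ contribute, because $c^{(m)}_r=0$ for $r<-m$.

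Now insert Lemma~\ref{lem:principal-part}, namely $c^{(m)}_{-j-1}=(-1)^{m-1+j}\frac{j!}{(m-1)!}p_{m,j}$. The factors $j!$ cancel, and $(-1)^{m-1+j}(-1)^j=(-1)^{m-1}$. This leaves
\begin{equation*}
\frac{(-1)^{m-1}}{(m-1)!}\sum_j p_{m,j}\bigl[(D+L)^jg\bigr]_{z=n}=\frac{(-1)^{m-1}}{(m-1)!}\bigl[P_m(D+L)g(z)\bigr]_{z=n}.
\end{equation*}
Multiplying by the prefactor $(-1)^{m-1}(m-1)!\,(-1)^{mn}x^n$ cancels $(m-1)!$ and the sign $(-1)^{m-1}$ squared. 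The result is exactly \eqref{eq:residue-lemma}.

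The heavy lifting is already done in Lemma~\ref{lem:principal-part}, so no step here is a real obstacle. The only care needed is sign bookkeeping, in two places: the periodicity sign $(-1)^{mn}$, and the $(-1)^j$ coming from $g(n-w)$, which must cancel against the $(-1)^j$ in the principal part. I would also note that the identification $x^{-z}=x^{n}e^{-w\log(x)}$ holds because the real logarithm is used.
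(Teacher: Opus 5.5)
Your proposal is correct and matches the paper's proof essentially step for step. Both translate $z=w-n$, use $C_m(w-n)=(-1)^{mn}C_m(w)$ and $g(n-w)x^{n-w}=x^nG_n(w)$, then pair the principal-part coefficients of Lemma~\ref{lem:principal-part} with the Taylor coefficients of Lemma~\ref{lem:taylor}, with the same cancellations of $j!$, $(m-1)!$ and the signs $(-1)^j$, $(-1)^{m-1}$. The paper phrases the truncation to $j\le m-1$ by splitting $C_m=\Pi_m+\Theta_m$ into principal and analytic parts, which is equivalent to your direct coefficient extraction.
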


\begin{proof}
Substitute $z=w-n$; residues are invariant under translation, so
\begin{equation}
\Res{z=-n}\bigl(C_m(z)g(-z)x^{-z}\bigr)=\Res{w=0}\bigl(C_m(w-n) g(n-w) x^{n-w}\bigr).
\end{equation}
By the addition formula, $\sin(\pi(w-n))=\sin(\pi w)\cos(\pi n)-\cos(\pi w)\sin(\pi n)=(-1)^n\sin(\pi w)$, so that
\begin{equation}
C_m(w-n)=\frac{\pi^m}{(-1)^{mn}\sin^m(\pi w)}=(-1)^{mn} C_m(w),
\end{equation}
and $g(n-w)x^{n-w}=x^n G_n(w)$ in the notation of Lemma~\ref{lem:taylor}. Therefore
\begin{equation}\label{eq:res-shift}
\Res{z=-n}\bigl(C_m(z)g(-z)x^{-z}\bigr)=(-1)^{mn}x^n \Res{w=0}\bigl(C_m(w) G_n(w)\bigr).
\end{equation}
On a punctured disk $0<|w|<\min(1,n+\delta)$ both expansions \eqref{eq:laurent-def} and \eqref{eq:Gn-taylor} converge, and the product $C_mG_n$ has Laurent expansion obtained by termwise multiplication. Write $C_m=\Pi_m+\Theta_m$, where $\Pi_m(w)=\sum_{j=0}^{m-1}c^{(m)}_{-j-1}w^{-j-1}$ is the principal part and $\Theta_m$ is analytic at $0$. The product $\Theta_mG_n$ is analytic at $0$ and contributes nothing to the residue; the residue of $\Pi_mG_n$ is the coefficient of $w^{-1}$, namely
\begin{equation}
\Res{w=0}\bigl(C_mG_n\bigr)=\sum_{j=0}^{m-1}c^{(m)}_{-j-1} a_j
=\sum_{j=0}^{m-1}(-1)^{m-1+j}\frac{j!}{(m-1)!} p_{m,j}\cdot\frac{(-1)^j}{j!}\Bigl[(D+L)^jg\Bigr]_{z=n},
\end{equation}
by Lemmas~\ref{lem:principal-part} and \ref{lem:taylor}. The signs combine as $(-1)^{m-1+j}(-1)^j=(-1)^{m-1}$ and the factorials cancel, so
\begin{equation}
\Res{w=0}\bigl(C_mG_n\bigr)=\frac{(-1)^{m-1}}{(m-1)!}\sum_{j=0}^{m-1}p_{m,j}\Bigl[(D+L)^jg\Bigr]_{z=n}
=\frac{(-1)^{m-1}}{(m-1)!}\Bigl[P_m(D+L) g\Bigr]_{z=n}.
\end{equation}
Multiplying \eqref{eq:res-shift} by $(-1)^{m-1}(m-1)!$ and recalling $h_m=(-1)^{m-1}(m-1)! C_m$ gives \eqref{eq:residue-lemma}.
\end{proof}

\subsection{Proof of Theorem \ref{thm:A}}

We first record the two quantitative lemmas, namely the convergence of the residue series and the vanishing of the arc term.

\begin{lemma}[Convergence of the residue series]\label{lem:convergence}
Let $g$ satisfy the hypotheses of Theorem~\ref{thm:A} and set $\rho=\delta/2$. Then for every $x>0$ and every integer $n\ge0$,
\begin{equation}\label{eq:coef-bound}
\Bigl|\bigl[P_m(D+L)g\bigr]_{z=n}\Bigr|
\le C_2(x) e^{Pn},
\end{equation}
where
\begin{equation}
C_2(x)=C_1\sum_{j=0}^{m-1}|p_{m,j}| j! \Bigl(|\log(x)|+\tfrac{2}{\delta}\Bigr)^{j}
\end{equation}
and $C_1=Ce^{|P|\rho+A\rho}$. Consequently, the series
\begin{equation}\label{eq:fm-series}
f_m(x)=\sum_{n=0}^{\infty}(-1)^{mn}\bigl[P_m(D+L)g\bigr]_{z=n} x^n
\end{equation}
converges absolutely, locally uniformly on $0<x<e^{-P}$.
\end{lemma}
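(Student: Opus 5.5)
The plan is to expand $P_m(D+L)$ by the binomial theorem and bound each derivative $g^{(k)}(n)$ by Cauchy's estimate on a small circle. By the formula in the notation subsection,
\begin{equation*}
\bigl[P_m(D+L)g\bigr]_{z=n}=\sum_{j=0}^{m-1}p_{m,j}\sum_{k=0}^{j}\binom{j}{k}L^{j-k}g^{(k)}(n),
\end{equation*}
so it suffices to control $|g^{(k)}(n)|$ for $0\le k\le m-1$ uniformly in $n\ge0$.

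For the derivatives, take the circle $|z-n|=\rho$ with $\rho=\delta/2$. Every point of this circle has real part at least $n-\rho\ge-\delta/2>-\delta$, so the closed disk lies in $H(\delta)$. On the circle, $v\le n+\rho$ and $|w|\le\rho$. The growth hypothesis \eqref{eq:growth-A} then gives
\begin{equation*}
|g(z)|\le Ce^{Pv+A|w|}\le Ce^{Pn+|P|\rho+A\rho}=C_1e^{Pn}.
\end{equation*}
If $A<0$, one replaces $A\rho$ by $|A|\rho$; alternatively, one may assume $A\ge0$ without loss, since enlarging $A$ weakens the hypothesis. Cauchy's estimate then yields
\begin{equation*}
|g^{(k)}(n)|\le k!\,C_1e^{Pn}\rho^{-k}=k!\,C_1e^{Pn}(2/\delta)^k.
\end{equation*}

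Substituting this bound, and using $\binom{j}{k}k!\le j!$, we get
\begin{equation*}
\sum_{k=0}^{j}\binom{j}{k}|L|^{j-k}\,k!\,(2/\delta)^k\le j!\sum_{k}\binom{j}{k}|L|^{j-k}(2/\delta)^k=j!\,\bigl(|\log x|+2/\delta\bigr)^j.
\end{equation*}
Multiplying by $|p_{m,j}|$ and summing over $j$ gives exactly $C_2(x)e^{Pn}$, which proves \eqref{eq:coef-bound}.

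For convergence, the $n$-th term of \eqref{eq:fm-series} is bounded by $C_2(x)(xe^P)^n$. Since $C_2$ is continuous in $x$ on $(0,\infty)$, it is bounded on any compact $[a,b]\subset(0,e^{-P})$. On such an interval $xe^P\le be^P<1$, so the Weierstrass M-test gives absolute and locally uniform convergence. The only delicate point is checking that the Cauchy circle stays inside $H(\delta)$ even at $n=0$, and this is exactly why $\rho=\delta/2$ is chosen. Everything else is bookkeeping.
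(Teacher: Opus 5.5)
Your proposal is correct and is essentially the paper's proof. Both use Cauchy estimates on the disk of radius $\rho=\delta/2$ about $n$, which stays inside $H(\delta)$ even when $n=0$. Both then bound the binomial expansion of $(D+L)^j$ via $k!\le j!$; note that this, not $\binom{j}{k}k!\le j!$, is what your displayed step actually uses. Both finish by summing against $|p_{m,j}|$.

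The only differences are cosmetic. You conclude with the Weierstrass M-test on compact subintervals of $(0,e^{-P})$, where the paper cites the root test together with local boundedness of $C_2$. You also explicitly address the sign of $A$, which the paper's constant $C_1=Ce^{|P|\rho+A\rho}$ tacitly, and harmlessly, takes to be nonnegative.
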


\begin{proof}
For $n\ge0$ the closed disk $\overline{D}(n,\rho)$ lies in $H(\delta)$, since $\re(z)\ge n-\rho\ge-\delta/2>-\delta$ there. Cauchy's estimates on this disk give, using \eqref{eq:growth-A},
\begin{equation}
|g^{(k)}(n)|\le \frac{k!}{\rho^{k}}\max_{|z-n|=\rho}|g(z)|
\le \frac{k!}{\rho^{k}} C e^{Pn+|P|\rho+A\rho}
= \frac{k!}{\rho^{k}} C_1 e^{Pn},
\end{equation}
since $\re(z)\in[n-\rho,n+\rho]$ on the circle gives $e^{P\re(z)}\le e^{Pn}e^{|P|\rho}$ for either sign of $P$, and $|\im(z)|\le\rho$. Hence for $0\le j\le m-1$,
\begin{equation}
\Bigl|\bigl[(D+L)^jg\bigr]_{z=n}\Bigr|
\le\sum_{k=0}^{j}\binom{j}{k}|L|^{ j-k} |g^{(k)}(n)|
\le C_1e^{Pn}\sum_{k=0}^{j}\binom{j}{k}|L|^{ j-k}\frac{k!}{\rho^{k}}
\le C_1e^{Pn} j! \Bigl(|L|+\frac1\rho\Bigr)^{j},
\end{equation}
where the last step uses $k!\le j!$ and the binomial theorem. Summing against $|p_{m,j}|$ gives \eqref{eq:coef-bound}. Since $C_2$ is bounded on compact subsets of $(0,\infty)$ and $\limsup_{n\to\infty}\bigl(C_2(x)e^{Pn}x^n\bigr)^{1/n}=xe^{P}<1$ for $x<e^{-P}$, the series \eqref{eq:fm-series} converges absolutely and locally uniformly there by the root test.
\end{proof}

\begin{lemma}[Arc estimate]\label{lem:arc}
Let $g$ satisfy the hypotheses of Theorem~\ref{thm:A}, fix $0<c<\delta$ and $0<x<e^{-P}$, and set $\mu=-\log(xe^P)$ and $\nu=\min(m\pi-A,\ \mu)$, both positive. Then
\begin{equation}
\Bigl|\int_{\gamma_N}h_m(z) g(-z) x^{-z} dz\Bigr|
\le C_3 R_N e^{-\nu R_N}\xrightarrow[N\to\infty]{}0,
\end{equation}
with $C_3$ independent of $N$.
\end{lemma}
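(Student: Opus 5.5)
The estimate is a pointwise bound for the integrand on $\gamma_N$, multiplied by the arc length $\pi R_N$. Write $z=v+iw=c+R_Ne^{i\theta}$ with $\theta\in[\pi/2,3\pi/2]$.

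\textbf{Step 1: bound the three factors.} Applying Lemma~\ref{lem:csc-bound} $m$ times gives
\[
|h_m(z)|\le (m-1)!\,\pi^m K^m e^{-m\pi|w|}.
\]
Since $\re(-z)=-v\ge -c>-\delta$, the point $-z$ lies in $H(\delta)$. The growth hypothesis \eqref{eq:growth-A} therefore gives $|g(-z)|\le Ce^{-Pv+A|w|}$. Finally, $|x^{-z}|=e^{-v\log(x)}$. Multiplying,
\[
|h_m(z)g(-z)x^{-z}|\le C' e^{-(m\pi-A)|w|}\,e^{-v(\log(x)+P)}=C'e^{-(m\pi-A)|w|}e^{\mu v}.
\]
Here $\mu=-\log(xe^P)>0$ and $C'=(m-1)!\pi^mK^mC$ does not depend on $N$.

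\textbf{Step 2: combine the exponents.} On the arc, $v=c+R_N\cos(\theta)$ with $\cos(\theta)\le0$, and $|w|=R_N|\sin(\theta)|$. Hence
\[
(m\pi-A)|w|-\mu v\ge -\mu c+R_N\bigl(\nu|\sin(\theta)|+\nu|\cos(\theta)|\bigr)\ge -\mu c+\nu R_N,
\]
using $m\pi-A\ge\nu$, $\mu\ge\nu$, and $|\sin(\theta)|+|\cos(\theta)|\ge1$. It follows that the integrand is bounded by $C'e^{\mu c}e^{-\nu R_N}$ uniformly on $\gamma_N$.

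\textbf{Step 3: integrate.} Multiplying by the length $\pi R_N$ of $\gamma_N$ gives the claimed bound with $C_3=\pi C'e^{\mu c}$. Since $\nu>0$, the quantity $R_Ne^{-\nu R_N}$ tends to $0$ as $N\to\infty$.

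The only delicate point is Step 2. It requires one rate $\nu$ that controls decay both near the real axis, where the cosecant gives no help but $e^{\mu v}$ decays because $v\approx -R_N$, and near the top and bottom of the arc, where $v\approx c$ but $|w|\approx R_N$. The elementary inequality $|\sin(\theta)|+|\cos(\theta)|\ge1$ handles both regimes at once. This is also where the hypothesis $A<m\pi$ enters, through the $m$-fold decay of $\csc^m$.
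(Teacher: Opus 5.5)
Your proposal is correct and follows essentially the same route as the paper. You bound $|h_m|$ via Lemma~\ref{lem:csc-bound}, bound $|g(-z)|$ by the growth hypothesis using $\re(-z)\ge -c>-\delta$, and combine the exponents with $|\sin(\theta)|+|\cos(\theta)|\ge1$; integrating over the arc then gives the same constant $C_3=\pi(m-1)!\,\pi^mK^mCe^{c\mu}$ as the paper's argument.
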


\begin{proof}
Let $z=v+iw=c+R_Ne^{i\theta}\in\gamma_N$ with $\theta\in[\pi/2,3\pi/2]$, so that $v=c+R_N\cos(\theta)$ with $\cos(\theta)\le0$ and $w=R_N\sin(\theta)$. Since $v\le c<\delta$, the point $-z$ satisfies $\re(-z)=-v\ge-c>-\delta$, so $-z\in H(\delta)$ and \eqref{eq:growth-A} applies, giving
\begin{equation}
|g(-z)|\le Ce^{-Pv+A|w|}.
\end{equation}
Moreover $|x^{-z}|=x^{-v}$ and, by Lemma~\ref{lem:csc-bound}, $|C_m(z)|\le\pi^mK^me^{-m\pi|w|}$. Hence
\begin{equation}
\bigl|h_m(z)g(-z)x^{-z}\bigr|
\le(m-1)! \pi^mK^mC\;e^{(A-m\pi)|w|}\;e^{-v(P+\log(x))} .
\end{equation}
Now $-v(P+\log(x))=v\mu$ and $v=c+R_N\cos(\theta)$, so $e^{-v(P+\log(x))}=e^{c\mu}e^{\mu R_N\cos(\theta)}$; also $|w|=R_N|\sin(\theta)|$. On $[\pi/2,3\pi/2]$ we have $\cos(\theta)=-|\cos(\theta)|$, so the exponent obeys
\begin{equation}
\begin{aligned}
(A-m\pi)R_N|\sin(\theta)|+\mu R_N\cos(\theta)
&=-R_N\Bigl[(m\pi-A)|\sin(\theta)|+\mu|\cos(\theta)|\Bigr]\\
&\le-R_N \nu \bigl(|\sin(\theta)|+|\cos(\theta)|\bigr)\le-\nu R_N,
\end{aligned}
\end{equation}
using $|\sin(\theta)|+|\cos(\theta)|\ge\sin^2(\theta)+\cos^2(\theta)=1$. Since $|dz|=R_N d\theta$,
\begin{equation}
\Bigl|\int_{\gamma_N}h_m(z) g(-z) x^{-z} dz\Bigr|\le(m-1)! \pi^mK^mC e^{c\mu}\;R_N\int_{\pi/2}^{3\pi/2}e^{-\nu R_N} d\theta
=C_3R_Ne^{-\nu R_N}\to0 .\qedhere
\end{equation}
\end{proof}

\begin{proof}[Proof of Theorem~\ref{thm:A}]
Fix $0<c<\delta$ and first let $0<x<e^{-P}$. Consider the function
\begin{equation}
F(z)=h_m(z) g(-z) x^{-z},
\end{equation}
which is meromorphic on the closed region bounded by the Hardy contour of index $N$. Indeed, $x^{-z}$ is entire, $g(-z)$ is analytic there because the region lies in $\{\re(z)\le c\}$ so that $\re(-z)\ge-c>-\delta$, and $h_m$ is meromorphic with poles of order $m$ precisely at the integers. The integers enclosed by the contour of index $N$ are those in the interval $(-N-\tfrac12, c)$, namely $z=0,-1,\dots,-N$, and no pole lies on the contour, whose real crossings are $c\in(0,1)$ and $-N-\tfrac12$. The contour, consisting of the vertical segment traversed upward and the arc $\gamma_N$ traversed from $c+iR_N$ through $c-R_N$ to $c-iR_N$, is positively oriented, so the residue theorem gives
\begin{equation}\label{eq:residue-thm}
\frac{1}{2\pi i}\int_{c-iR_N}^{c+iR_N}F(z) dz
+\frac{1}{2\pi i}\int_{\gamma_N}F(z) dz
=\sum_{n=0}^{N}\Res{z=-n}F(z)
=\sum_{n=0}^{N}(-1)^{mn}\bigl[P_m(D+L)g\bigr]_{z=n}x^n,
\end{equation}
the last equality by Lemma~\ref{lem:residue}. We now let $N\to\infty$. The arc term vanishes by Lemma~\ref{lem:arc}, and the right side converges to $f_m(x)$ by Lemma~\ref{lem:convergence}. For the vertical term, we need integrability on the whole line $\re(z)=c$. For $z=\sigma+it$ with $\sigma\in[a,b]\subset(0,\delta)$, the two-case argument of Lemma~\ref{lem:csc-bound}, with $|\sin(\pi z)|\ge|\sin(\pi\sigma)|\ge\sin(\pi\min(a,1-b))$ when $|t|\le\tfrac12$ and with $|\sin(\pi z)|\ge\tfrac12(1-e^{-\pi})e^{\pi|t|}$ when $|t|\ge\tfrac12$, yields
\begin{equation}\label{eq:vertical-bound}
|\csc(\pi(\sigma+it))|\le K_{a,b} e^{-\pi|t|},
\end{equation}
where
\begin{equation}
K_{a,b}=\max\Bigl\{\frac{e^{\pi/2}}{\sin(\pi\min(a,1-b))},\ \frac{2}{1-e^{-\pi}}\Bigr\},
\end{equation}
uniformly in $\sigma\in[a,b]$, since $\sin(\pi\sigma)\ge\sin(\pi\min(a,1-b))$ for $\sigma\in[a,b]\subset(0,1)$ by the concavity of the sine on $[0,\pi]$. Hence
\begin{equation}\label{eq:vertical-integrand}
|F(\sigma+it)|\le(m-1)! \pi^mK_{a,b}^m C e^{-P\sigma}x^{-\sigma} e^{(A-m\pi)|t|},
\end{equation}
which is integrable in $t$ since $A<m\pi$. Passing to the limit in \eqref{eq:residue-thm},
\begin{equation}\label{eq:inverse-mellin}
\frac{1}{2\pi i}\int_{c-i\infty}^{c+i\infty}h_m(s) g(-s) x^{-s} ds=f_m(x),
\qquad 0<x<e^{-P}.
\end{equation}

Now define $f_m(x)$ for \emph{all} $x>0$ by the left side of \eqref{eq:inverse-mellin}. By \eqref{eq:vertical-integrand} and Cauchy's theorem applied to rectangles with vertical sides $\re(s)=c_1$ and $\re(s)=c_2$, whose horizontal sides tend to zero by \eqref{eq:vertical-integrand}, the definition is independent of $c\in(0,\delta)$, and it agrees with the series \eqref{eq:fm-series} on $(0,e^{-P})$.

Finally, we apply the Mellin inversion theorem in the form used in~\cite[Theorem~2]{bradshaw2025generalized}; see also Chapter~8 of~\cite{debnath2016integral}, as well as \cite{titchmarsh1948introduction}. Suppose $\varphi$ is analytic on the strip $a<\re(s)<b$ and $\int_{-\infty}^{\infty}|\varphi(\sigma+it)| dt$ converges uniformly for $\sigma$ in compact subsets of $(a,b)$, with $\varphi(\sigma+it)\to0$ as $|t|\to\infty$. Then the function $f$ defined by $f(x)=\frac{1}{2\pi i}\int_{c-i\infty}^{c+i\infty}\varphi(s)x^{-s}ds$ satisfies $\int_0^\infty x^{s-1}f(x) dx=\varphi(s)$ for $a<\re(s)<b$. Here $\varphi(s)=h_m(s)g(-s)$ is analytic on $0<\re(s)<\delta$, because the strip contains no integers, so that $\sin^m(\pi s)\ne0$ there, and $g(-s)$ is analytic because $\re(-s)>-\delta$. The required uniform integrability on every $[a,b]\subset(0,\delta)$ is \eqref{eq:vertical-integrand}. Therefore
\begin{equation}
\int_0^\infty x^{s-1}f_m(x) dx=h_m(s) g(-s)
=\frac{(-1)^{m-1}(m-1)! \pi^m}{\sin^m(\pi s)} g(-s),
\qquad 0<\re(s)<\delta,
\end{equation}
which is \eqref{eq:conjecture}. This proves Theorem~\ref{thm:A}.
\end{proof}

\begin{remark}\label{rem:continuation}
As in Hardy's theorem and in~\cite[Theorem~2]{bradshaw2025generalized}, the series \eqref{eq:fm-series} is guaranteed to converge only on $(0,e^{-P})$, and on the rest of $(0,\infty)$, the integrand of \eqref{eq:conjecture} is understood as the function $f_m$ defined by the inverse Mellin integral \eqref{eq:inverse-mellin}, which is its unique real-analytic continuation whenever one exists. In the applications of Section~\ref{sec:applications}, the continuation is exhibited in closed form.
\end{remark}

\begin{remark}\label{rem:weaker}
For $m\ge2$, the hypothesis $A<m\pi$ of Theorem~\ref{thm:A} is strictly weaker than Hardy's condition $A<\pi$. The $m$-fold decay $|\csc^m(\pi z)|\ll e^{-m\pi|\im(z)|}$ absorbs test functions $g$ growing almost as fast as $e^{m\pi|\im(z)|}$ on vertical lines. For example, $g(z)=1/\Gamma(z+1)^{2m-1}$ is admissible in Theorem~\ref{thm:A} for the given $m$ but fails Hardy's condition for every $m\ge2$. Indeed, the argument establishing \eqref{eq:invgamma-bound} in the proof of Corollary~\ref{cor:gamma} below in fact gives $|1/\Gamma(z+1)|\le C_{\delta,\varepsilon} e^{(\pi/2+\varepsilon)|\im(z)|}$ on $H(\delta)$ for every $\varepsilon>0$, so this $g$ satisfies \eqref{eq:growth-A} with any $A>(2m-1)\tfrac{\pi}{2}$, and $(2m-1)\tfrac{\pi}{2}<m\pi$ while $(2m-1)\tfrac{\pi}{2}>\pi$ for $m\ge2$.
\end{remark}

\section{Special cases and applications}\label{sec:applications}

The case $g=1$ of Theorem~\ref{thm:A} admits a complete independent verification, which we present first. Recall the classical Mellin evaluations, valid for $0<\re(s)<1$,
\begin{equation}\label{eq:classical-mellin}
\int_0^\infty\frac{x^{s-1}}{1+x} dx=\frac{\pi}{\sin(\pi s)},
\end{equation}
and
\begin{equation}
\mathrm{P.V.}\!\int_0^\infty\frac{x^{s-1}}{1-x} dx=\pi\cot(\pi s).
\end{equation}

\begin{proposition}[Differential identities]\label{prop:differential}
Let $s\in\mathbb{C}\setminus\mathbb{Z}$. Then for every odd $m\ge1$,
\begin{equation}\label{eq:diff-identity-odd}
P_m\Bigl(\frac{d}{ds}\Bigr)\Bigl[\frac{\pi}{\sin(\pi s)}\Bigr]=\frac{(-1)^{m-1}(m-1)! \pi^m}{\sin^m(\pi s)},
\end{equation}
and for every even $m\ge2$,
\begin{equation}\label{eq:diff-identity-even}
P_m\Bigl(\frac{d}{ds}\Bigr)\bigl[\pi\cot(\pi s)\bigr]=\frac{(-1)^{m-1}(m-1)! \pi^m}{\sin^m(\pi s)} .
\end{equation}
\end{proposition}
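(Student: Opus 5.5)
The plan is to induct on $m$ in steps of two, with base cases $m=1$ (odd chain) and $m=2$ (even chain). The engine is the differential recursion of Lemma~\ref{lem:ode}. The point is that the Airault recursion $P_{m+2}(x)=(x^2+m^2\pi^2)P_m(x)$ turns, at the operator level, into $P_{m+2}(\tfrac{d}{ds})=(\tfrac{d^2}{ds^2}+m^2\pi^2)\,P_m(\tfrac{d}{ds})$. This factorization is legitimate because constant-coefficient polynomials in $\tfrac{d}{ds}$ compose like ordinary polynomials. All functions involved are meromorphic with poles only at the integers, so every derivative makes sense on $\mathbb{C}\setminus\mathbb{Z}$.

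\emph{Setup.} Write $C_m(s)=\pi^m/\sin^m(\pi s)$ as in \eqref{eq:Cm-def}. Let $B_{\mathrm{odd}}(s)=\pi/\sin(\pi s)=C_1(s)$ and $B_{\mathrm{even}}(s)=\pi\cot(\pi s)$. Define $Q_m=P_m(\tfrac{d}{ds})B(s)$, with $B=B_{\mathrm{odd}}$ for odd $m$ and $B=B_{\mathrm{even}}$ for even $m$. The claim to prove in both parities is
\begin{equation*}
Q_m=(-1)^{m-1}(m-1)!\,C_m .
\end{equation*}

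\emph{Base cases.} For $m=1$, $P_1=1$, so $Q_1=\pi/\sin(\pi s)=C_1$, which is the claim. For $m=2$, $P_2(x)=x$, so
\begin{equation*}
Q_2=\frac{d}{ds}\bigl[\pi\cot(\pi s)\bigr]=-\pi^2\csc^2(\pi s)=-C_2=(-1)^{1}\,1!\,C_2 ,
\end{equation*}
which is again the claim.

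\emph{Induction step.} Assume the claim at $m$, within the same parity class. By the operator factorization,
\begin{equation*}
Q_{m+2}=\Bigl(\frac{d^2}{ds^2}+m^2\pi^2\Bigr)Q_m=(-1)^{m-1}(m-1)!\,\bigl(C_m''+m^2\pi^2C_m\bigr).
\end{equation*}
Lemma~\ref{lem:ode} gives $C_m''+m^2\pi^2C_m=m(m+1)\,C_{m+2}$. Substituting,
\begin{equation*}
Q_{m+2}=(-1)^{m-1}(m-1)!\,m(m+1)\,C_{m+2}=(-1)^{(m+2)-1}\,(m+1)!\,C_{m+2},
\end{equation*}
using $(-1)^{m-1}=(-1)^{m+1}$. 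This is the claim at $m+2$. The odd chain starting from $m=1$ gives \eqref{eq:diff-identity-odd}, and the even chain starting from $m=2$ gives \eqref{eq:diff-identity-even}.

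There is no serious obstacle here. The only points needing care are that $P_{m+2}(\tfrac{d}{ds})=(\tfrac{d^2}{ds^2}+m^2\pi^2)P_m(\tfrac{d}{ds})$ holds as an identity of operators, which follows from the polynomial identity \eqref{eq:airault-rec}, and that the base function $B$ must be chosen according to parity. In particular, the even chain has to start from the cotangent, since $\tfrac{d}{ds}\cot$ is what produces $\csc^2$.
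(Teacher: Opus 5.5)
Your proposal is correct and matches the paper's proof: both turn Lemma~\ref{lem:ode} into $\bigl(\frac{d^2}{ds^2}+m^2\pi^2\bigr)h_m=h_{m+2}$ and then induct along the odd chain from $P_1$ applied to $\pi\csc(\pi s)$ and along the even chain from $P_2$ applied to $\pi\cot(\pi s)$, via the operator factorization $P_{m+2}(\frac{d}{ds})=(\frac{d^2}{ds^2}+m^2\pi^2)P_m(\frac{d}{ds})$. Your base cases, sign bookkeeping and factorial bookkeeping are all right.
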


\begin{proof}
Write $ h_m(s)=(-1)^{m-1}(m-1)! \pi^m\csc^m(\pi s)$. By Lemma~\ref{lem:ode},
\begin{equation}\label{eq:Rm-rec}
\begin{aligned}
 h_m''+m^2\pi^2 h_m&=(-1)^{m-1}(m-1)!\bigl[C_m''+m^2\pi^2C_m\bigr]\\
&=(-1)^{m-1}(m-1)! m(m+1) C_{m+2}\\
&=(-1)^{m+1}(m+1)! C_{m+2}\\&= h_{m+2},
\end{aligned}
\end{equation}
that is, $\bigl(\frac{d^2}{ds^2}+m^2\pi^2\bigr) h_m= h_{m+2}$. For the odd chain, $P_1(\frac{d}{ds})[\pi\csc(\pi s)]=\pi\csc(\pi s)= h_1$; and if $P_m(\frac{d}{ds})[\pi\csc(\pi s)]= h_m$, then
\begin{equation}
P_{m+2}\Bigl(\frac{d}{ds}\Bigr)\bigl[\pi\csc(\pi s)\bigr]
=\Bigl(\frac{d^2}{ds^2}+m^2\pi^2\Bigr)P_m\Bigl(\frac{d}{ds}\Bigr)\bigl[\pi\csc(\pi s)\bigr]
=\Bigl(\frac{d^2}{ds^2}+m^2\pi^2\Bigr) h_m= h_{m+2},
\end{equation}
by \eqref{eq:airault-rec} and \eqref{eq:Rm-rec}. For the even chain, $\frac{d}{ds}\bigl[\pi\cot(\pi s)\bigr]=-\pi^2\csc^2(\pi s)= h_2$, so $P_2(\frac{d}{ds})[\pi\cot(\pi s)]= h_2$, and the identical induction applies.
\end{proof}

The identities \eqref{eq:diff-identity-odd}--\eqref{eq:diff-identity-even} sit opposite the derivative-polynomial formalism of Hoffman~\cite{hoffman1995derivative} and Cvijovi\'c~\cite{cvijovic2009derivative}, in which each derivative $d^{n}/ds^{n}$ of $\csc$ or $\cot$ is expanded as a polynomial in $\cot$; Proposition~\ref{prop:differential} runs the other way, exhibiting the constant-coefficient operator $P_{m}(d/ds)$ that collapses such an expansion to the single power $\csc^{m}$. For even $m$, this is visible directly: $\csc^{m}=(1+\cot^{2})^{m/2}$, so \eqref{eq:diff-identity-even} is an identity between polynomials in $\cot(\pi s)$ once the left side is expanded.

\begin{corollary}[The case $g=1$]\label{cor:g1}
For every odd $m\ge1$ and $0<\re(s)<1$,
\begin{equation}\label{eq:g1-odd}
\int_0^\infty x^{s-1} \frac{P_m(\log(x))}{1+x} dx
=\frac{(m-1)! \pi^m}{\sin^m(\pi s)} .
\end{equation}
For every even $m\ge2$ and $0<\re(s)<1$, with the singularity of the integrand at $x=1$ removable,
\begin{equation}\label{eq:g1-even}
\int_0^\infty x^{s-1} \frac{P_m(\log(x))}{1-x} dx
=\frac{-(m-1)! \pi^m}{\sin^m(\pi s)} .
\end{equation}
In particular, $m=3$ gives $\displaystyle\int_0^\infty x^{s-1} \frac{\log^2(x)+\pi^2}{1+x} dx=\frac{2\pi^3}{\sin^3(\pi s)}$ and $m=2$ gives $\displaystyle\int_0^\infty x^{s-1} \frac{\log(x)}{1-x} dx=\frac{-\pi^2}{\sin^2(\pi s)}$.
\end{corollary}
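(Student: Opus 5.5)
Two routes are available: specialize Theorem~\ref{thm:A} to $g=1$, or argue directly from Proposition~\ref{prop:differential} together with the classical Mellin evaluations \eqref{eq:classical-mellin}. I would use the direct route, since it yields the closed form of the integrand for all $x>0$ without appeal to continuation. The Theorem~\ref{thm:A} route can then serve as a consistency check. With $g=1$, the residue series is $\sum_n(-1)^{mn}P_m(\log x)x^n$. For odd $m$ this sums to $P_m(\log x)/(1+x)$, and for even $m$ to $P_m(\log x)/(1-x)$, on $0<x<1$. Note that $P=0$ and $A=0$ are admissible, with any $\delta<1$.

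\textbf{Direct argument, odd $m$.} The key observation is that multiplying the integrand by $\log(x)$ corresponds to differentiating the Mellin transform in $s$: $\frac{d}{ds}x^{s-1}=\log(x)\,x^{s-1}$. Hence $\frac{d^j}{ds^j}\int_0^\infty \frac{x^{s-1}}{1+x}dx=\int_0^\infty \frac{x^{s-1}\log^j(x)}{1+x}dx$ for $0<\re(s)<1$. Differentiation under the integral sign is justified by dominated convergence. On a compact set of $s$ with $\re(s)\in[a,b]\subset(0,1)$, the integrand is dominated by $|\log x|^j x^{a-1}$ near $0$ and by $|\log x|^j x^{b-2}$ near $\infty$, both integrable. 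Applying the linear combination $P_m(d/ds)$ to \eqref{eq:classical-mellin} and invoking \eqref{eq:diff-identity-odd} then gives \eqref{eq:g1-odd}, since $(-1)^{m-1}=1$ for odd $m$.

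\textbf{Direct argument, even $m$.} The same scheme applies starting from the principal value evaluation of $\pi\cot(\pi s)$, and this step is the delicate one, because interchanging derivative and principal value needs care. I would sidestep it as follows. Since $P_m(0)=0$ for even $m$ by Proposition~\ref{prop:airault}(iii), we can write $P_m(y)=y\,Q(y)$, so the integrand $x^{s-1}\log(x)Q(\log x)/(1-x)$ is genuinely integrable near $x=1$ and absolutely convergent on $(0,\infty)$ for $0<\re(s)<1$. It then suffices to establish the base identity $\int_0^\infty x^{s-1}\log(x)/(1-x)\,dx=\frac{d}{ds}\pi\cot(\pi s)$. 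Granting that, the remaining powers of $\log$ follow by differentiating under an absolutely convergent integral, exactly as in the odd case. The result is $P_m(d/ds)[\pi\cot(\pi s)]$, which equals $-(m-1)!\pi^m\csc^m(\pi s)$ by \eqref{eq:diff-identity-even}.

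For the base identity, I would note that $\mathrm{P.V.}\int_0^\infty x^{s-1}/(1-x)\,dx$ can be written as $\int_0^\infty (x^{s-1}-x^{-s})/(1-x)\,dx$ plus a term handled by the symmetry $x\mapsto1/x$. The differentiated integrand is then absolutely integrable near $1$. Alternatively, and more cleanly, the even case is obtained from Theorem~\ref{thm:A} with $g=1$: the inverse Mellin integral equals $P_m(\log x)/(1-x)$ on $(0,1)$, and a symmetric residue computation, closing the contour to the right, shows the same closed form on $(1,\infty)$. Mellin inversion then yields \eqref{eq:g1-even}. I expect the main obstacle to be precisely this identification of the continuation on $x>1$, or equivalently the principal-value interchange.

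Finally, the special cases follow by substitution: $m=3$ uses $P_3(y)=y^2+\pi^2$, and $m=2$ uses $P_2(y)=y$.
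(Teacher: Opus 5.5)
Your proposal is correct. For odd $m$ it is exactly the paper's secondary verification, but your main route for even $m$ differs from the paper's main proof. The paper specializes Theorem~\ref{thm:A} to $g=1$ and sums $\sum_n((-1)^mx)^n$ on $(0,1)$. It then disposes of the step you flag as the main obstacle in one line. The inverse Mellin integral \eqref{eq:inverse-mellin} is analytic in $x$ on a sector containing $(0,\infty)$, by \eqref{eq:vertical-integrand} and Morera. The closed form is analytic on a neighborhood of $(0,\infty)$; for even $m$ this is because the simple zero of $P_m$ at $0$ cancels that of $1-x$. The identity theorem then carries agreement from $(0,1)$ to all $x>0$. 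This treats both parities uniformly and needs neither a right-hand contour nor any principal value.

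For even $m$ the paper explicitly declines the direct route because of the principal-value interchange, and your reduction removes that difficulty. Since $p_{m,0}=0$, only the base identity for $\log(x)/(1-x)$ is needed. It follows in three steps:
\begin{itemize}
\item[(a)] Fold $(1,\infty)$ onto $(0,1)$ by $x\mapsto 1/x$ to get $\mathrm{P.V.}\int_0^\infty x^{s-1}(1-x)^{-1}\,dx=\int_0^1(x^{s-1}-x^{-s})(1-x)^{-1}\,dx$. The two symmetric excisions differ only on an interval of length $O(\varepsilon^2)$ where the integrand is $O(1/\varepsilon)$.
\item[(b)] Differentiate under this absolutely convergent integral.
\item[(c)] Unfold the resulting $x^{-s}\log(x)$ term back to $(1,\infty)$.
\end{itemize}
You should state the folding in this precise form rather than as ``plus a term handled by the symmetry.''

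What your route buys is a proof of the corollary for both parities that is independent of Theorem~\ref{thm:A}. That makes it a genuine cross-check of the theorem at $g=1$, which the paper carries out only for odd $m$. What the paper's route buys is brevity and uniformity.

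Your fallback of closing the contour to the right for $x>1$ is also valid. The arc estimate mirrors Lemma~\ref{lem:arc}, with $x^{-\re(s)}$ decaying since $\log(x)>0$. The residues at $s=n\ge1$ are $(-1)^{mn}P_m(\log(x))x^{-n}$, and with the clockwise sign they sum to $P_m(\log(x))/(1-(-1)^mx)$. It is, however, heavier than the identity-theorem argument.
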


\begin{proof}
Take $g=1$ in Theorem~\ref{thm:A}; then \eqref{eq:growth-A} holds with $C=1$, $P=A=0$, and any $0<\delta<1$, so the theorem applies on $0<\re(s)<\delta$ for every $\delta<1$, hence on $0<\re(s)<1$. Since $Dg=0$, we have $[P_m(D+L)g]_{z=n}=P_m(\log(x))$ for every $n$, and for $0<x<1$,
\begin{equation}
f_m(x)=P_m(\log(x))\sum_{n\ge0}\bigl((-1)^mx\bigr)^n=\frac{P_m(\log(x))}{1-(-1)^mx}.
\end{equation}
It remains to identify the continuation of $f_m$ to $(0,\infty)$, in the sense of Remark~\ref{rem:continuation}, with the displayed closed form. The inverse Mellin integral \eqref{eq:inverse-mellin} defines a real-analytic function of $x$ on $(0,\infty)$; indeed, it is analytic in the sector $|\arg(x)|<m\pi-A$, by \eqref{eq:vertical-integrand} and Morera's theorem. For odd $m$, the closed form $P_m(\log(x))/(1+x)$ is analytic on a complex neighborhood of $(0,\infty)$; the two agree on $(0,1)$, hence on $(0,\infty)$ by the identity theorem. For even $m$, Proposition~\ref{prop:airault}(iii) gives $P_m(0)=0$ with a simple zero of $P_m$ at $0$. Since $\log(x)$ has a simple zero at $x=1$, the numerator $P_m(\log(x))$ has a simple zero cancelling the simple zero of the denominator $1-x$, so $P_m(\log(x))/(1-x)$ is again analytic on a neighborhood of $(0,\infty)$, with value $-p_{m,1}$ at $x=1$, and the identity-theorem argument applies verbatim. The integrals converge absolutely at both ends, the integrand being $O(x^{\re(s)-1}|\log(x)|^{m-1})$ as $x\to0^+$ and $O(x^{\re(s)-2}\log^{m-1}(x))$ as $x\to\infty$.

Alternatively, and independently of Theorem~\ref{thm:A}, for odd $m$ one may insert $\log^j(x)$ under the first integral of \eqref{eq:classical-mellin}, which amounts to applying $\frac{d^j}{ds^j}$; differentiation under the integral sign is justified by the dominating bound $(x^{a-1}+x^{b-1})(1+|\log(x)|)^{j}/(1+x)\in L^1$ for $\re(s)\in[a,b]\subset(0,1)$. The left side of \eqref{eq:g1-odd} then equals $P_m(\frac{d}{ds})[\pi\csc(\pi s)]$, and Proposition~\ref{prop:differential} gives the result. For even $m$, an analogous verification through the principal-value evaluation in \eqref{eq:classical-mellin} is possible, but it requires justifying differentiation of a principal-value integral with respect to the parameter; since the first proof covers both parities, we do not pursue it.
\end{proof}

\begin{corollary}[The case $g(z)=a^z$]\label{cor:az}
Let $a>0$. For odd $m\ge1$ and $0<\re(s)<1$,
\begin{equation}
\int_0^\infty x^{s-1} \frac{P_m(\log(ax))}{1+ax} dx=\frac{(m-1)! \pi^m}{\sin^m(\pi s)} a^{-s},
\end{equation}
and the analogous formula with $1-ax$ and the sign $-1$ holds for even $m$.
\end{corollary}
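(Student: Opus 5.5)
The cleanest route is to reduce Corollary~\ref{cor:az} to Corollary~\ref{cor:g1} by the substitution $x\mapsto x/a$. Theorem~\ref{thm:A} is then only used indirectly.

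\emph{Reduction to $g=1$.} Take odd $m$ and $0<\re(s)<1$, and set $y=ax$, so that $x=y/a$ and $dx=dy/a$. Then
\begin{equation*}
\int_0^\infty x^{s-1}\frac{P_m(\log(ax))}{1+ax}\,dx
=a^{-s}\int_0^\infty y^{s-1}\frac{P_m(\log(y))}{1+y}\,dy .
\end{equation*}
By \eqref{eq:g1-odd} this equals $a^{-s}(m-1)!\,\pi^m/\sin^m(\pi s)$. For even $m$ the same substitution applied to \eqref{eq:g1-even} gives the formula with $1-ax$ and the overall sign $-1$. The removable singularity now sits at $x=1/a$. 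Absolute convergence is inherited directly from Corollary~\ref{cor:g1}, since the change of variables preserves integrability.

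\emph{Consistency with Theorem~\ref{thm:A}.} It is worth checking that this agrees with the general theorem, because that explains why $\log(ax)$ appears. The function $g(z)=a^z=e^{z\log a}$ is entire. It satisfies \eqref{eq:growth-A} with $C=1$, $P=\log a$, $A=0$ and any $\delta<1$, because $|a^{v+iw}|=a^v$. Since $D$ acts on $a^z$ as multiplication by $\log a$, we get
\begin{equation*}
[P_m(D+L)a^z]_{z=n}=P_m(\log a+\log x)\,a^n=P_m(\log(ax))\,a^n .
\end{equation*}
The residue series is then $P_m(\log(ax))\sum_n((-1)^m ax)^n=P_m(\log(ax))/(1-(-1)^m ax)$ for $0<x<1/a=e^{-P}$. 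The right-hand side $h_m(s)a^{-s}$ agrees with the substitution result, with the sign $(-1)^{m-1}$ giving $+$ for odd $m$ and $-$ for even $m$.

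\emph{Main obstacle.} If one instead argues through Theorem~\ref{thm:A}, the only delicate step is identifying the inverse-Mellin continuation on $x\ge 1/a$ with the closed form. This works exactly as in Corollary~\ref{cor:g1}. The continuation is real-analytic, and the closed form is analytic near $(0,\infty)$: for even $m$, the simple zero of $P_m$ at $0$ cancels the zero of $1-ax$ at $x=1/a$. The identity theorem then gives agreement on all of $(0,\infty)$. The substitution route avoids this issue entirely, so that is the route I would write up.
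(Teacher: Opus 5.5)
Your proposal is correct and matches the paper's proof, which gives exactly these two routes. The paper leads with the specialization of Theorem~\ref{thm:A} to $g(z)=a^z$ (with $P=\log(a)$, $A=0$, and the continuation argument of Corollary~\ref{cor:g1} applied verbatim), and only then notes the substitution $x\mapsto x/a$ in Corollary~\ref{cor:g1} as an equivalent route, whereas you put the substitution first and the specialization second.
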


\begin{proof}
The function $g(z)=a^z$ satisfies \eqref{eq:growth-A} with $A=0$ and $P=\log(a)$. Since $Da^z=\log(a) a^z$, one has $[(D+L)^ja^z]_{z=n}=(\log(a)+\log(x))^ja^n$, hence $[P_m(D+L)g]_{z=n}=P_m(\log(ax)) a^n$, and for $0<x<a^{-1}$ the series sums to $P_m(\log(ax))/(1-(-1)^max)$. The continuation and convergence arguments of Corollary~\ref{cor:g1} apply verbatim, and $g(-s)=a^{-s}$. Equivalently, one may substitute $x\mapsto x/a$ in Corollary~\ref{cor:g1}; the agreement of the two routes is a consistency check on Theorem~\ref{thm:A}.
\end{proof}

\begin{corollary}[The case $m=2$, $g(z)=1/\Gamma(z+1)$]\label{cor:gamma}
For $0<\re(s)<1$,
\begin{equation}\label{eq:BA-524}
\sum_{n=0}^{\infty}\Bigl[P_2\Bigl(\tfrac{d}{dz}+\log(x)\Bigr)\frac{1}{\Gamma(z+1)}\Bigr]_{z=n}x^n=-e^x \Gamma(0,x),
\qquad x>0,
\end{equation}
and consequently
\begin{equation}\label{eq:BA-525}
\int_0^\infty x^{s-1}e^x \Gamma(0,x) dx=\frac{\pi^2}{\sin^2(\pi s) \Gamma(1-s)}=\frac{\pi}{\sin(\pi s)} \Gamma(s).
\end{equation}
These are equations (5.24) to (5.26) of~\cite{bradshaw2025generalized}, obtained there by symbolic evaluation and stated as consequences of the conjecture; Theorem~\ref{thm:A} promotes them to theorems.
\end{corollary}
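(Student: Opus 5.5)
We plan to apply Theorem~\ref{thm:A} with $m=2$ and $g(z)=1/\Gamma(z+1)$, verify the growth hypothesis, compute the residue series explicitly, and then identify its continuation with $-e^x\Gamma(0,x)$.

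\emph{Step 1 (growth).} We will show $|1/\Gamma(z+1)|\le C_{\delta,\varepsilon}e^{(\pi/2+\varepsilon)|\im z|}$ on $H(\delta)$. This is the bound \eqref{eq:invgamma-bound} referred to in Remark~\ref{rem:weaker}. The most direct route is Stirling's formula in the form $\log\Gamma(z+1)=(z+\tfrac12)\log(z+1)-(z+1)+O(1)$, valid uniformly for $\re(z)\ge-\delta$ away from $z=-1$. It gives $-\re\log\Gamma(z+1)\le \tfrac{\pi}{2}|w|-v\log|z+1|+v+O(\log|z+1|)$. The term $-v\log|z|$ is bounded above by a constant when $v$ is near $-\delta$ and is negative for large $v$, so the bound holds with $P=0$ after enlarging $C$. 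Hence $A=\pi/2+\varepsilon<2\pi$, and Theorem~\ref{thm:A} applies for any $\delta<1$. We can also take $P$ arbitrarily negative, since $1/\Gamma$ decays superexponentially along the real direction. This means the series converges for all $x>0$ and no continuation is needed, which is worth recording.

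\emph{Step 2 (the series).} With $P_2(x)=x$, the $n$-th term is $[(D+L)\Gamma(z+1)^{-1}]_{z=n}x^n=\frac{\log x-\psi(n+1)}{n!}x^n$, since $D(1/\Gamma(z+1))=-\psi(z+1)/\Gamma(z+1)$. Summing gives $e^x\log x-\sum_n\psi(n+1)x^n/n!$. We then need the classical identity $\sum_{n\ge0}\psi(n+1)\frac{x^n}{n!}=e^x\bigl(\log x+\Gamma(0,x)\bigr)$. To prove it, use $\psi(n+1)=-\gamma+H_n$ and $\sum_n H_n x^n/n!=e^x\sum_{k\ge1}\frac{(-1)^{k-1}x^k}{k\,k!}$, which follows from a Cauchy product or from differentiating $e^{-x}\sum H_nx^n/n!$, which yields $(1-e^{-x})/x$. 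Combining this with $\Gamma(0,x)=-\gamma-\log x-\sum_{k\ge1}\frac{(-x)^k}{k\,k!}$ gives the identity, and then \eqref{eq:BA-524} follows. We expect this step to be the main bookkeeping obstacle, mostly in keeping the signs straight.

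\emph{Step 3 (Mellin transform).} Theorem~\ref{thm:A} gives $\int_0^\infty x^{s-1}(-e^x\Gamma(0,x))\,dx=h_2(s)g(-s)=-\pi^2\csc^2(\pi s)/\Gamma(1-s)$. Negating both sides gives the first equality in \eqref{eq:BA-525}. The reflection formula $\Gamma(s)\Gamma(1-s)=\pi/\sin(\pi s)$ then gives the second. We will also check absolute convergence of the integral directly. As $x\to0$ it is $O(x^{\re s-1}|\log x|)$, and as $x\to\infty$ it is $O(x^{\re s-2})$, since $e^x\Gamma(0,x)\sim1/x$. This confirms that the strip is all of $0<\re(s)<1$.
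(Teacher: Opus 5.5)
Your proposal is correct and follows the paper's overall plan: check growth, compute $[(D+L)g]_{z=n}=(\log x-\psi(n+1))/n!$, sum via $\psi(n+1)=-\gamma+H_n$ and the exponential-integral series, then apply Theorem~\ref{thm:A} with $m=2$ and the reflection formula. Three ingredients are handled differently.

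\emph{Growth bound.} The paper avoids Stirling altogether. It uses the Weierstrass modulus identity $|1/\Gamma(v+1+iw)|=\Gamma(v+1)^{-1}\prod_{k\ge0}(1+w^2/(v+1+k)^2)^{1/2}$ and Euler's sine product. This gives the fully explicit bound $\gamma_\delta^{-1}(1+|w|/(1-\delta))e^{\pi|w|/2}$, after which the paper settles for $A=2$. Your Stirling route gives the sharper $A=\pi/2+\varepsilon$ directly, but it needs Stirling uniformly on the closed half-plane $\re(z+1)\ge1-\delta$.

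One sentence of your sketch is false as written. For $-\delta\le v<0$ the term $-v\log|z+1|=|v|\log|z+1|$ is not bounded as $|w|\to\infty$. For $\delta>\tfrac12$ the true size $|1/\Gamma(v+1+iw)|\asymp|w|^{-v-1/2}e^{\pi|w|/2}$ really does exceed $e^{\pi|w|/2}$ by a power of $|w|$. This is harmless: a logarithm of $|w|$ is absorbed into $\varepsilon|w|$, exactly as your $O(\log|z+1|)$ term is.

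\emph{Generating function of $H_n$.} Your differential equation $(e^{-x}S)'=(1-e^{-x})/x$, $S(0)=0$, replaces the paper's representation $H_n=\int_0^1\frac{1-t^n}{1-t}\,dt$. The paper's route needs a monotone-convergence interchange; yours needs none.

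\emph{Taking $P$ arbitrarily negative.} This is a cleaner way to identify the inverse Mellin integral with $-e^x\Gamma(0,x)$ on all of $(0,\infty)$. With $P=0$, Theorem~\ref{thm:A} only gives this on $(0,1)$. The paper's ``no continuation is needed'' tacitly relies on uniqueness of the real-analytic continuation from Remark~\ref{rem:continuation}. To justify your claim you need the superexponential decay in $v$ uniformly in $w$. This follows, for instance, from $|1/\Gamma(v+1+iw)|\le\Gamma(1-\delta)/\bigl(\Gamma(v+1)\,|\Gamma(1-\delta+iw)|\bigr)$ on $H(\delta)$. In your framework it follows from $\log|z+1|\ge\log(1+v)$ for $v\ge0$.
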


\begin{proof}
First the growth hypothesis. We claim that for each $\delta\in(0,1)$ there is $C_\delta$ with
\begin{equation}\label{eq:invgamma-bound}
\Bigl|\frac{1}{\Gamma(z+1)}\Bigr|\le C_\delta e^{2|\im(z)|},\qquad z\in H(\delta).
\end{equation}
Write $z=v+iw$ with $v\ge-\delta$, so that $\re(z+1)\ge1-\delta>0$. The Weierstrass product yields the classical modulus identity $|\Gamma(\zeta)|^{2}=\Gamma(\re(\zeta))^{2}\prod_{k\ge0}\bigl(1+\im(\zeta)^{2}/(\re(\zeta)+k)^{2}\bigr)^{-1}$ for $\re(\zeta)>0$, so that
\begin{equation}
\Bigl|\frac{1}{\Gamma(z+1)}\Bigr|=\frac{1}{\Gamma(v+1)}\prod_{k\ge0}\Bigl(1+\frac{w^{2}}{(v+1+k)^{2}}\Bigr)^{1/2}.
\end{equation}
The factor $k=0$ is at most $1+|w|/(1-\delta)$, and for $k\ge1$ we have $v+1+k\ge k$, so the remaining product is at most
\begin{equation}
\prod_{j\ge1}\Bigl(1+\frac{w^{2}}{j^{2}}\Bigr)^{1/2}=\Bigl(\frac{\sinh(\pi|w|)}{\pi|w|}\Bigr)^{1/2}\le e^{\pi|w|/2},
\end{equation}
by Euler's product for the sine and the elementary inequality $\sinh(T)\le Te^{T}$. Writing $\gamma_\delta$ for the positive minimum of $\Gamma$ on $[1-\delta,\infty)$, we conclude $|1/\Gamma(z+1)|\le\gamma_\delta^{-1}\bigl(1+|w|/(1-\delta)\bigr)e^{\pi|w|/2}$, and \eqref{eq:invgamma-bound} follows since $\pi/2<2$. Thus \eqref{eq:growth-A} holds with $P=0$ and $A=2<2\pi=m\pi$, and Theorem~\ref{thm:A} applies with $m=2$ on $0<\re(s)<\delta$ for every $\delta<1$.

Next the summand. With $g(z)=1/\Gamma(z+1)$ we have $g'(z)=-\psi(z+1)/\Gamma(z+1)$, and $P_2(D+L)g=(D+L)g$, so
\begin{equation}
\bigl[P_2(D+L)g\bigr]_{z=n}=\frac{\log(x)-\psi(n+1)}{n!} .
\end{equation}
We claim
\begin{equation}\label{eq:psi-gen}
\sum_{n=0}^{\infty}\frac{\psi(n+1)}{n!} x^n=e^x\bigl(\log(x)+\Gamma(0,x)\bigr),\qquad x>0.
\end{equation}
Indeed $\psi(n+1)=-\gamma+H_n$ with $H_n=\sum_{k=1}^n\frac1k=\int_0^1\frac{1-t^n}{1-t} dt$, so, interchanging sum and integral by monotone convergence, all terms being non-negative for $t\in(0,1)$,
\begin{equation}
\sum_{n\ge0}\frac{H_n}{n!}x^n=\int_0^1\frac{e^x-e^{xt}}{1-t} dt
=e^x\int_0^1\frac{1-e^{-x(1-t)}}{1-t} dt
=e^x\int_0^x\frac{1-e^{-u}}{u} du,
\end{equation}
by the substitution $u=x(1-t)$. Termwise integration of $\frac{1-e^{-u}}{u}=\sum_{k\ge1}\frac{(-1)^{k-1}u^{k-1}}{k!}$ gives $\int_0^x\frac{1-e^{-u}}{u}du=\sum_{k\ge1}\frac{(-1)^{k-1}x^k}{k\cdot k!}=\gamma+\log(x)+\Gamma(0,x)$, the last equality being the classical expansion of the exponential integral $\Gamma(0,x)=\int_x^\infty e^{-u}\frac{du}{u}$ \cite[6.6.2]{DLMF}. Hence $\sum H_nx^n/n!=e^x(\gamma+\log(x)+\Gamma(0,x))$, and subtracting $\gamma e^x=\sum\gamma x^n/n!$ proves \eqref{eq:psi-gen}. Therefore
\begin{equation}
\sum_{n\ge0}\frac{\log(x)-\psi(n+1)}{n!}x^n=e^x\log(x)-e^x\bigl(\log(x)+\Gamma(0,x)\bigr)=-e^x\Gamma(0,x),
\end{equation}
which is \eqref{eq:BA-524}; here the series converges for all $x>0$ and equals the real-analytic function $-e^x\Gamma(0,x)$ on all of $(0,\infty)$, so no continuation is needed. Theorem~\ref{thm:A} now gives, with $g(-s)=1/\Gamma(1-s)$,
\begin{equation}
\int_0^\infty x^{s-1}\bigl(-e^x\Gamma(0,x)\bigr) dx=\frac{-\pi^2}{\sin^2(\pi s)}\cdot\frac{1}{\Gamma(1-s)},
\end{equation}
and Euler's reflection formula $\Gamma(s)\Gamma(1-s)=\pi/\sin(\pi s)$ converts the right side into the second form displayed in \eqref{eq:BA-525}. The convergence of the integral is absolute for $0<\re(s)<1$, since $e^x\Gamma(0,x)\sim-\log(x)$ as $x\to0^+$ and $e^x\Gamma(0,x)\sim1/x$ as $x\to\infty$.
\end{proof}

We close this section with the application announced in the introduction. Define the multiplicative convolution of suitable functions on $(0,\infty)$ by
\begin{equation}
(f\star g)(x)=\int_0^\infty f\Bigl(\frac{x}{t}\Bigr)g(t) \frac{dt}{t},
\end{equation}
under which Mellin transforms multiply. The Cauchy kernel $K_1(x)=(1+x)^{-1}$ has transform $\pi/\sin(\pi s)$ on the strip $0<\re(s)<1$, so the $m$-fold convolution power of $K_1$ has transform $\pi^m/\sin^m(\pi s)$, and Theorem~\ref{thm:A} identifies this power in closed form.

\begin{proposition}[Convolution powers of the Cauchy kernel]\label{prop:convolution}
Define $K_1(x)=(1+x)^{-1}$ and $K_m=K_{m-1}\star K_1$ for $m\ge2$. Then for every $m\ge1$ and every $x>0$,
\begin{equation}\label{eq:Km-closed}
K_m(x)=\frac{(-1)^{m-1}}{(m-1)!}\cdot\frac{P_m(\log(x))}{1-(-1)^mx},
\end{equation}
where for even $m$, the right side is interpreted by its limit $p_{m,1}/(m-1)!$ at $x=1$. In particular, $K_2(x)=\dfrac{\log(x)}{x-1}$ and $K_3(x)=\dfrac{\log^2(x)+\pi^2}{2(1+x)}$, and
\begin{equation}\label{eq:Km-mellin}
\int_0^\infty x^{s-1}K_m(x) dx=\Bigl(\frac{\pi}{\sin(\pi s)}\Bigr)^{m},\qquad 0<\re(s)<1 .
\end{equation}
\end{proposition}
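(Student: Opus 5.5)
The plan is to prove \eqref{eq:Km-closed} by induction on $m$, combining Corollary~\ref{cor:g1} with Mellin uniqueness. Write $\widetilde K_m(x)$ for the right side of \eqref{eq:Km-closed}. Corollary~\ref{cor:g1} states that for $0<\re(s)<1$,
\begin{equation*}
\int_0^\infty x^{s-1}\widetilde K_m(x)\,dx=\frac{\pi^m}{\sin^m(\pi s)}.
\end{equation*}
For odd $m$, $(-1)^{m-1}=1$ and the denominator is $1+x$. For even $m$, the sign $-1$ in \eqref{eq:g1-even} is absorbed by $(-1)^{m-1}=-1$. It therefore suffices to show that $K_m$ is well defined, continuous on $(0,\infty)$, has Mellin transform $(\pi/\sin(\pi s))^m$ on the strip, and that a function of this type is determined by its Mellin transform.

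First I would establish by induction the absolute convergence of the convolution, together with the two-sided bound
\begin{equation*}
K_m(x)\ll_\epsilon \min(x^{-\epsilon},x^{\epsilon-1})\qquad (0<x<\infty),
\end{equation*}
for each $\epsilon>0$. For $m=1$ this is immediate. For the inductive step, split $\int_0^\infty K_{m-1}(x/t)K_1(t)\,dt/t$ at $t=x$ and use the bounds already known for $K_{m-1}$ and $K_1$. This shows that $x^{\sigma-1}K_m(x)\in L^1(0,\infty)$ for every $0<\sigma<1$. Tonelli then justifies the multiplicativity
\begin{equation*}
\mathcal{M}[K_{m-1}\star K_1](s)=\mathcal{M}[K_{m-1}](s)\,\mathcal{M}[K_1](s),
\end{equation*}
which gives $\mathcal{M}[K_m](s)=\pi^m/\sin^m(\pi s)$ on $0<\re(s)<1$ by \eqref{eq:classical-mellin}. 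This also proves \eqref{eq:Km-mellin}. Continuity of $K_m$ follows from dominated convergence under the same bounds.

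Next, both $K_m$ and $\widetilde K_m$ are continuous, lie in the weighted $L^1$ space on the vertical line $\re(s)=c$, and share the Mellin transform there. Substituting $x=e^{-u}$ turns the Mellin transform on that line into the Fourier transform of the $L^1(\mathbb{R})$ function $e^{-cu}(K_m-\widetilde K_m)(e^{-u})$, which therefore vanishes. Fourier uniqueness in $L^1$ gives $K_m=\widetilde K_m$ almost everywhere, hence everywhere by continuity; at $x=1$ for even $m$ this is the stated limiting value. Alternatively, one can invoke the Mellin inversion theorem already used in the proof of Theorem~\ref{thm:A}, since $(\pi/\sin\pi s)^m$ is absolutely integrable on vertical lines, and recover both functions as the same inverse Mellin integral.

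The special cases follow from $P_2=x$ and $P_3=x^2+\pi^2$. For $K_2$, $-\log(x)/(1-x)=\log(x)/(x-1)$; for $K_3$, the formula gives $(\log^2 x+\pi^2)/(2(1+x))$.

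The main obstacle is the bookkeeping in the inductive growth bound, which is needed so that the Fubini/Tonelli interchange and the uniqueness argument are rigorous. Everything else reduces to Corollary~\ref{cor:g1}.
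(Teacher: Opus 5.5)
Your proposal is correct and follows essentially the paper's route. You use Tonelli for multiplicativity of the Mellin transform, Corollary~\ref{cor:g1} for the transform of the closed form, $L^1$ Fourier uniqueness after an exponential change of variables, and continuity to pass from almost-everywhere to everywhere equality. The only difference is technical: you obtain integrability and continuity of $K_m$ from an a priori inductive bound $K_m(x)\ll_\epsilon\min(x^{-\epsilon},x^{\epsilon-1})$. The paper instead gets integrability from nonnegativity and Tonelli alone, and continuity by inducting on the already-identified closed form with its explicit bound \eqref{eq:Ccl-bound}.
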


\begin{proof}
Write $C_m^{\mathrm{cl}}(x)$ for the right side of \eqref{eq:Km-closed} and note first that $C_m^{\mathrm{cl}}$ is continuous on $(0,\infty)$. For odd $m$ this is clear. For even $m$, Proposition~\ref{prop:airault}(iii) gives $P_m(0)=0$ with $P_m'(0)=p_{m,1}$, so $P_m(\log(x))$ has a simple zero at $x=1$ cancelling the simple zero of $1-x$, with limiting value
$\lim_{x\to1}\frac{P_m(\log(x))}{1-x}=\lim_{x\to1}\frac{p_{m,1}\log(x)+O(\log^2(x))}{1-x}=-p_{m,1}$,
so that, for even $m$, $C_m^{\mathrm{cl}}(1)=\frac{(-1)^{m-1}}{(m-1)!}\cdot(-p_{m,1})=\frac{p_{m,1}}{(m-1)!}$; for example $K_2(1)=1$. Moreover, from continuity together with the asymptotics $P_m(\log(x))=O(|\log(x)|^{m-1})$ at $0$ and at $\infty$,
\begin{equation}\label{eq:Ccl-bound}
|C_m^{\mathrm{cl}}(y)|\le C_m \frac{(1+|\log(y)|)^{m-1}}{1+y},\qquad y>0 .
\end{equation}

\emph{Step 1, transforms.} We show by induction that $K_m$ is finite almost everywhere, non-negative, locally integrable, and
\begin{equation}\label{eq:Km-transform}
\int_0^\infty x^{s-1}K_m(x) dx=\Bigl(\frac{\pi}{\sin(\pi s)}\Bigr)^{m},\qquad 0<\re(s)<1,
\end{equation}
with absolute convergence. For $m=1$, this is the first evaluation in \eqref{eq:classical-mellin}, with absolute convergence clear. Assume it for $m$. Since $K_m\ge0$ and $K_1\ge0$, Tonelli's theorem applies to the non-negative double integral. For real $\sigma\in(0,1)$, substituting $x=ty$,
\begin{equation}
\begin{aligned}
\int_0^\infty x^{\sigma-1}K_{m+1}(x) dx
&=\int_0^\infty\!\!\int_0^\infty x^{\sigma-1}K_m\Bigl(\frac{x}{t}\Bigr)\frac{1}{1+t} \frac{dt}{t} dx\\
&=\int_0^\infty\frac{t^{\sigma-1}}{1+t} dt\int_0^\infty y^{\sigma-1}K_m(y) dy\\
&=\Bigl(\frac{\pi}{\sin(\pi\sigma)}\Bigr)^{m+1},
\end{aligned}
\end{equation}
which is finite; hence $K_{m+1}<\infty$ almost everywhere, $K_{m+1}\ge0$, and $x^{\sigma-1}K_{m+1}\in L^1$. For complex $s=\sigma+i\tau$ the same computation with Fubini's theorem gives \eqref{eq:Km-transform}, the required absolute convergence being the $\sigma$-case just established, since $|x^{s-1}|=x^{\sigma-1}$.

\emph{Step 2, uniqueness.} Fix $\sigma\in(0,1)$ and define $f(y)=e^{\sigma y}K_m(e^{y})$ and $g(y)=e^{\sigma y}C^{\mathrm{cl}}_m(e^{y})$ for $y\in\mathbb{R}$. By Step~1 and by \eqref{eq:Ccl-bound}, both belong to $L^1(\mathbb{R})$, and by \eqref{eq:Km-transform} and Corollary~\ref{cor:g1} their Fourier transforms coincide. For $\tau\in\mathbb{R}$,
\begin{equation}
\widehat f(\tau)=\int_{\mathbb{R}}e^{-i\tau y}e^{\sigma y}K_m(e^y) dy=\int_0^\infty x^{\sigma-i\tau-1}K_m(x) dx
=\Bigl(\frac{\pi}{\sin(\pi(\sigma-i\tau))}\Bigr)^{m}=\widehat g(\tau),
\end{equation}
the evaluation of $\widehat g$ being Corollary~\ref{cor:g1} multiplied by $\frac{(-1)^{m-1}}{(m-1)!}\cdot(-1)^{m-1}(m-1)!=1$. By the uniqueness theorem for Fourier transforms of $L^1$ functions~\cite{titchmarsh1948introduction}, $f=g$ almost everywhere, that is $K_m=C^{\mathrm{cl}}_m$ almost everywhere on $(0,\infty)$.

\emph{Step 3, everywhere.} We upgrade to pointwise equality by showing inductively that $K_m$ is continuous. $K_1$ is continuous and equals $C^{\mathrm{cl}}_1$. Suppose $K_m=C^{\mathrm{cl}}_m$ everywhere; in particular $K_m$ is continuous. For fixed $x>0$, we then have
\begin{equation}
K_{m+1}(x)=\int_0^\infty C^{\mathrm{cl}}_m\Bigl(\frac{x}{t}\Bigr)\frac{1}{(1+t)t} dt .
\end{equation}
The integrand is jointly continuous in $(x,t)\in(0,\infty)^2$, and for $x$ in a compact interval $[a,b]\subset(0,\infty)$ the bound \eqref{eq:Ccl-bound} gives the $x$-independent dominating function
\begin{equation}
C_m \frac{\bigl(1+|\log(a)|+|\log(b)|+|\log(t)|\bigr)^{m-1}}{\bigl(1+\frac{a}{t}\bigr)(1+t) t}\in L^1\bigl((0,\infty),dt\bigr),
\end{equation}
integrability at $t\to0^+$ coming from the factor $(1+\tfrac at)^{-1}t^{-1}\le a^{-1}$ and at $t\to\infty$ from $(1+t)^{-1}t^{-1}$. By dominated convergence, $K_{m+1}$ is continuous on $(0,\infty)$; being almost everywhere equal to the continuous function $C^{\mathrm{cl}}_{m+1}$ by Step~2, it equals it everywhere. This closes the induction, proves \eqref{eq:Km-closed}, and \eqref{eq:Km-mellin} is \eqref{eq:Km-transform}.
\end{proof}

\begin{remark}\label{rem:probabilistic}
The identity \eqref{eq:Km-closed} has a probabilistic reading which both explains its positivity content and locates it in an older literature. The substitution $x=e^{y}$ converts multiplicative convolution into ordinary convolution on $\mathbb{R}$, and
\begin{equation}\label{eq:sech}
  \frac{1}{\pi}\,e^{y/2}K_{1}(e^{y})
  \;=\;\frac{1}{2\pi}\operatorname{sech}\!\left(\frac{y}{2}\right)
\end{equation}
is a probability density with characteristic function $\operatorname{sech}(\pi\tau)$, a scaled hyperbolic secant law. Hence $\pi^{-m}e^{y/2}K_{m}(e^{y})$ is the density of a sum of $m$ independent copies, and \eqref{eq:Km-closed} is equivalent to the assertion that this density equals
\begin{equation}\label{eq:density}
  \frac{P_{m}(y)}{2\pi^{m}(m-1)!}
  \times
  \begin{cases}
    \operatorname{sech}(y/2), & m\text{ odd},\\[2pt]
    \operatorname{csch}(y/2), & m\text{ even}.
  \end{cases}
\end{equation}

In particular, since $K_{m}\geq 0$ by construction, \eqref{eq:Km-closed} records a positivity property of the polynomials $P_{m}$: the functions $(-1)^{m-1}P_{m}(\log(x))$ and $1-(-1)^{m}x$ have the same sign on $(0,\infty)$. For odd $m$ this is the trivial positivity of $P_{m}(t)=\prod(t^{2}+j^{2}\pi^{2})$, and for even $m$ it is the statement that $P_{m}(t)=t\prod(t^{2}+j^{2}\pi^{2})$ changes sign with $t=\log(x)$ exactly as $x-1$ does. The reading \eqref{eq:density} makes the property automatic rather than incidental, the two parities being the two cases of the single fact that a density is nonnegative.

The density of such a sum was first obtained by Baten~\cite{baten1934probability}, and the associated family of laws was developed by Harkness and Harkness~\cite{harkness1968generalized}. The closely related transforms $\int_{\mathbb{R}}(y/\sinh y)^{\ell}e^{ixy}\,dy$, together with a differential-difference recursion equivalent to Lemma~\ref{lem:ode}, appear in Pitman and Yor~\cite{pitman2003infinitely}, and the polynomials $P_{m}$ themselves in Airault's work~\cite{airault2008hyperbolic}. What \eqref{eq:Km-closed} adds is the factored closed form on the multiplicative side, a single polynomial evaluated at $\log(x)$ over the elementary kernel $1-(-1)^{m}x$, with the zeros of the numerator located exactly by Proposition~\ref{prop:airault}(iii). We have not found this formulation in the literature.
\end{remark}

\section{Concluding remarks}

Conjecture~1 of~\cite{bradshaw2025generalized} holds under the growth condition $A<m\pi$, strictly weaker than Hardy's for $m\geq 2$, and the mechanism of the proof is worth isolating. A second-order differential recursion in the pole-order parameter, Lemma~\ref{lem:ode}, translates through the residue calculus into a two-term recursion on principal-part coefficients, and the polynomial family solving that recursion is the family $P_{m}$ of \eqref{eq:airault-rec}. The same mechanism seems likely to apply whenever the principal parts of a family of kernels are generated from a fixed member by a linear differential equation in the order parameter, and it would be interesting to identify further kernels of this type among those admissible in~\cite[Theorem~5]{bradshaw2025generalized}.

The threshold $m\pi$ is optimal. Take $g(z)=\sin^{m}(\pi z)$, which is entire and satisfies \eqref{eq:growth-A} with $C=1$, $P=0$ and $A=m\pi$, since $|\sin(\pi z)|\leq \cosh(\pi w)\leq e^{\pi|w|}$. This $g$ vanishes to order $m$ at every integer, so $g^{(k)}(n)=0$ for all $n\geq 0$ and all $k\leq m-1$; as $\deg P_{m}=m-1$, every term of \eqref{eq:fm-series} vanishes and the left side of \eqref{eq:conjecture} is identically zero. The right side, however, is the nonzero constant
\begin{equation}
  (-1)^{m-1}(m-1)!\,\frac{\pi^{m}}{\sin^{m}(\pi s)}\,\sin^{m}(-\pi s)
  \;=\;-(m-1)!\,\pi^{m}.
\end{equation}
Hence $A<m\pi$ cannot be relaxed to $A\leq m\pi$, and $m\pi$ cannot be replaced by any larger constant. For $m=1$ this is the classical obstruction $g(z)=\sin(\pi z)$ to Hardy's theorem, and the general case shows that the admissible exponential type scales exactly with the pole order.

\bibliographystyle{unsrt}
\bibliography{refs}

\end{document}